
\documentclass[11pt]{amsart}

\usepackage[a4paper,hmargin=3.5cm,vmargin=4cm]{geometry}
\usepackage{amsfonts,amssymb,amscd,amstext}
\usepackage{graphicx}
\usepackage[dvips]{epsfig}


\usepackage{verbatim}
\usepackage{fancyhdr}
\pagestyle{fancy}
\fancyhf{}

\input xy
\xyoption{all}


\usepackage{times}

\usepackage{enumerate}
\usepackage{titlesec}
\usepackage{mathrsfs}

\pretolerance=2000
\tolerance=3000


\headheight=13.03pt
\headsep 0.15cm
\topmargin 0.5cm
\textheight = 49\baselineskip
\textwidth 14cm
\oddsidemargin 1cm
\evensidemargin 1cm

\setlength{\parskip}{0.5em}

\titleformat{\section}
{\filcenter\bfseries\large} {\thesection{.}}{0.2cm}{}
\titleformat{\subsection}[runin]
{\bfseries} {\thesubsection{.}}{0.15cm}{}[.]
\titleformat{\subsubsection}[runin]
{\em}{\thesubsubsection{.}}{0.15cm}{}[.]

\usepackage[up,bf]{caption}


\newtheorem{theorem}{Theorem}[section]
\newtheorem{proposition}[theorem]{Proposition}

\newtheorem{lemma}[theorem]{Lemma}
\newtheorem{corollary}[theorem]{Corollary}

\theoremstyle{definition}

\newtheorem{remark}[theorem]{Remark}

\numberwithin{equation}{section}
\numberwithin{figure}{section}


\newcommand\Ncal{\mathcal{N}}

\newcommand\Pcal{\mathcal{P}}



\newcommand\Ascr{\mathscr{A}}

\newcommand\Cscr{\mathscr{C}}

\newcommand\Oscr{\mathscr{O}}


\newcommand\B{\mathbb{B}}
\newcommand\C{\mathbb{C}}
\newcommand\D{\overline{\mathbb D}}
\newcommand\CP{\mathbb{CP}}
\renewcommand\D{\mathbb D}

\newcommand\N{\mathbb{N}}
\renewcommand\P{\mathbb{P}}
\newcommand\R{\mathbb{R}}

\newcommand\Z{\mathbb{Z}}

\renewcommand\c{\mathbb{C}}


\newcommand\igot{\mathfrak{i}}

\renewcommand\igot{\mathfrak{i}}

\newcommand\ggot{\mathfrak{g}}

%
%

%
%

\renewcommand\imath{\igot}

%
%
\newcommand\hra{\hookrightarrow}
\newcommand\lra{\longrightarrow}

%
%
\newcommand\wt{\widetilde}

\newcommand\di{\partial}

%
%

\newcommand\dist{\mathrm{dist}}

\newcommand\length{\mathrm{length}}

\newcommand\Id{\mathrm{Id}}

\def\dist{\mathrm{dist}}

\def\length{\mathrm{length}}

\newcommand\Span{\mathrm{Span}}


\usepackage{color}

\begin{document}

\fancyhead[LO]{Darboux charts around holomorphic Legendrian curves}
\fancyhead[RE]{A.\ Alarc\'on and F.\ Forstneri\v c} 
\fancyhead[RO,LE]{\thepage}

\thispagestyle{empty}

\vspace*{1cm}
\begin{center}
{\bf\LARGE Darboux charts around holomorphic Legendrian curves and applications}

\vspace*{0.5cm}

{\large\bf  Antonio Alarc{\'o}n and Franc Forstneri{\v c}} 
\end{center}


\vspace*{1cm}

\begin{quote}
{\small
\noindent {\bf Abstract}\hspace*{0.1cm}
In this paper we find a holomorphic Darboux chart around any immersed noncompact holomorphic Legendrian curve
in a complex contact manifold $(X,\xi)$. By using such a chart, we show that every holomorphic Legendrian immersion $R\to X$ 
from an open Riemann surface can be approximated on relatively compact subsets of $R$ by 
holomorphic Legendrian embeddings, and every holomorphic Legendrian immersion $M\to X$ 
from a compact bordered Riemann surface is a uniform limit of topological embeddings $M\hra X$ such that  $\mathring M\hra X$ 
is a complete holomorphic Legendrian embedding. We also establish a contact neighborhood theorem for
isotropic Stein submanifolds in complex contact manifolds.

\vspace*{0.2cm}

\noindent{\bf Keywords}\hspace*{0.1cm} complex contact manifold, Darboux chart, Legendrian curve, 
isotropic submanifold

\vspace*{0.1cm}

\noindent{\bf MSC (2010):}\hspace*{0.1cm} 53D10; 32E30, 32H02, 37J55}

\vspace*{0.1cm}
\noindent{\bf Date: \rm February 2, 2017. This version: \today}

\end{quote}


\section{Introduction and main results} 
\label{sec:intro}

A {\em complex contact manifold} is a pair $(X,\xi)$, where $X$ is a complex manifold of 
(necessarily) odd dimension $2n+1\ge 3$ and $\xi$ is a completely noninvolutive holomorphic hyperplane subbundle 
(a {\em contact subbundle}) of the holomorphic tangent bundle $TX$. 
Locally near any point of $X$, a contact subbundle  is the kernel of a holomorphic $1$-form 
$\alpha$ satisfying $\alpha \wedge(d\alpha)^n \neq 0$; such $\alpha$ is called a {\em holomorphic contact form}. 
(Globally we have $\xi=\ker \alpha$ for a holomorphic $1$-form with coefficients in the
line bundle $\nu=TX/\xi$; see Sec.\ \ref{sec:CNT}.)
By a fundamental theorem of Darboux \cite{Darboux1882CRAS} from 1882, there are local 
coordinates $(x_1,y_1,\ldots, x_n,y_n,z)$ at any point of $X$ in which 
the contact structure $\xi$ is given by the {\em standard contact form}
\begin{equation}\label{eq:alpha0}
	\alpha_0= dz + \sum_{j=1}^n x_j \, dy_j. 
\end{equation}
The proof of Darboux's theorem given by Moser \cite{Moser1965TAMS} (see also \cite[p.\ 67]{Geiges2008}) 
easily adapts to the holomorphic case; see \cite[Theorem A.2]{AlarconForstnericLopez2017CM}. 
This shows that a holomorphic contact structure has no local invariants, and hence all interesting problems are 
of global nature.

Let $(X,\xi)$ be a complex contact manifold of dimension $2n+1\ge 3$. A smooth immersed submanifold $f\colon R\to X$
is said to be {\em isotropic} if 
\[
	\text{$df_x(T_x R)\subset \xi_{f(x)}$\ \ holds for all $x\in R$.}
\]
If $\xi=\ker\alpha$, this is equivalent to $f^*\alpha=0$. The contact condition implies $\dim_\R R\le 2n$;
the immersion is said to be {\em Legendrian} if $R$ is of maximal dimension $\dim_\R R=2n$. 
(See Sec.\ \ref{sec:CNT} for more details.) It is easily seen that the image of a smooth Legendrian immersion 
is necessarily a complex submanifold of $X$ (see Lemma \ref{lem:Legendrian}).
Since we shall mainly consider the case when $R$ is an open Riemann surface and $f\colon R\to X$
is an isotropic holomorphic curve, we will use the term {\em Legendrian curve} also when $\dim X\ge 5$,
with the exception of Sec.\ \ref{sec:CNT} where we consider also higher dimensional complex isotropic submanifolds.

In this paper we prove that there exists a holomorphic Darboux chart around any 
immersed noncompact holomorphic Legendrian curve, and also around some higher dimensional
isotropic Stein submanifolds, in an arbitrary complex contact manifold. 
The following is our first main result; it is proved in Section \ref{sec:normal}. 

%
%

\begin{theorem}\label{th:normal}
Let $(X,\xi)$ be a  complex contact manifold of dimension $2n+1\ge 3$. Assume that 
$R$ is an open Riemann surface, $\theta$ is a nowhere vanishing holomorphic $1$-form on $R$,
and $f\colon R\to X$ is a holomorphic Legendrian immersion.
Then there are an open neighborhood $\Omega\subset R\times\C^{2n}$ of $R\times \{0\}^{2n}$
and a holomorphic immersion $F\colon \Omega \to X$ (embedding if $f$ is an embedding) such that 
$F|_{R\times \{0\}^{2n}}=f$ and  the contact structure $F^*\xi$ on $\Omega$ is given by the contact form
\begin{equation}\label{eq:normal}
	\alpha= dz- y_1\theta - \sum_{i=2}^n y_i dx_i,
\end{equation}
where 
$(x_2,\ldots, x_n,y_1,\ldots,y_n,z)$ are complex coordinates on  $\C^{2n}$.
\end{theorem}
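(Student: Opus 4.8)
The plan is to reduce the statement to a Moser-type deformation along the Legendrian curve, after first producing an initial immersion that realizes $\ker\alpha$ correctly to first order along the zero section. As a warm-up one checks that \eqref{eq:normal} really is a contact form with $L:=R\times\{0\}^{2n}$ Legendrian: writing $\theta=h\,dt$ in a local holomorphic coordinate $t$ on $R$ with $h$ nowhere zero, one has $d\alpha=-dy_1\wedge\theta-\sum_{i=2}^n dy_i\wedge dx_i$ (as $d\theta=0$ for dimension reasons), so $\alpha\wedge(d\alpha)^n$ is a nonvanishing multiple of $dz\wedge\theta\wedge dy_1\wedge\bigwedge_{i=2}^n(dx_i\wedge dy_i)$, while $\alpha|_L=0$. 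Because $R$ is an open Riemann surface and $f$ is noncompact, the relevant line bundle $\nu=TX/\xi$ is holomorphically trivial over a neighborhood of the curve (for an immersion one works with the pulled-back bundles over $R$, where every holomorphic line bundle is trivial), so $\xi=\ker\beta$ for a scalar holomorphic contact form $\beta$. The goal then becomes to build a holomorphic immersion $F_0$ from a neighborhood of $L$ into $X$, extending $f$, whose $1$-jet along $L$ is adapted to $\alpha$, and then to correct $F_0$ to the desired $F$ by a contact isotopy fixing $L$.

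The heart of the argument, and the step I expect to be the main obstacle, is the construction of $F_0$, i.e. of an adapted holomorphic frame along $f$. Set $\omega:=d\beta|_\xi$, the fibrewise complex symplectic form on $\xi$, let $\mathcal{R}$ be the Reeb field of $\beta$, and write $a:=df(\partial_t)$, a nowhere-zero isotropic section of $(f^*\xi,\omega)$. I would prescribe $dF_0(\partial_z):=\mathcal{R}\circ f$, so that $\beta(dF_0(\partial_z))=1$ and $d\beta(\mathcal{R},\cdot)=0$; this kills all $z$-pairings and matches the absence of a $dz$-term in $d\alpha$. Since $\iota_a\omega$ is a nowhere-zero section of $(f^*\xi)^*$, I can then solve the single linear equation $\omega(dF_0(\partial_{y_1}),a)=-h$ for a section $dF_0(\partial_{y_1})$ of $f^*\xi$, making $\langle a,dF_0(\partial_{y_1})\rangle$ a symplectic plane; on its $\omega$-orthogonal complement, a symplectic subbundle of rank $2n-2$, choose a symplectic frame $dF_0(\partial_{x_i}),dF_0(\partial_{y_i})$ ($2\le i\le n$) normalised by $\omega(dF_0(\partial_{y_i}),dF_0(\partial_{x_i}))=-1$. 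As $R$ is an open Riemann surface all these bundles are holomorphically trivial, so the symplectic Gram--Schmidt process runs holomorphically over all of $R$ and the frame exists globally; by construction its $\beta$- and $\omega$-pairings reproduce the full $1$-jet of $\alpha$ along $L$. It remains to promote this frame to an actual holomorphic immersion $F_0$ near $L$ with the prescribed differential along $L$: for an embedded $f$ this is a holomorphic tubular neighborhood (Docquier--Grauert), and in general one integrates the frame over the Stein curve $R$, the patching obstruction lying in a first cohomology group over $R$ that vanishes.

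With $F_0$ in hand, set $\tilde\alpha:=F_0^*\beta$ and $\sigma:=\tilde\alpha-\alpha$; by the frame construction both $\sigma$ and $d\sigma$ vanish along $L$. Consider the family $\alpha_s:=\alpha+s\sigma$ for $0\le s\le1$, which is contact near $L$ for all $s$ by openness, since $\alpha_s$ and $d\alpha_s$ agree with $\alpha$ and $d\alpha$ along $L$. I would now apply Moser's method, seeking a time-dependent holomorphic vector field $Y_s\in\ker\alpha_s$ whose flow $\psi_s$ satisfies $\psi_s^*\alpha_s=\lambda_s\alpha$; differentiating in $s$ reduces this to $\iota_{Y_s}d\alpha_s+\sigma=\mu_s\alpha_s$. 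Contracting with the Reeb field $\mathcal{R}_s$ of $\alpha_s$ forces $\mu_s=\sigma(\mathcal{R}_s)$, after which $\mu_s\alpha_s-\sigma$ annihilates $\mathcal{R}_s$; as $d\alpha_s$ is nondegenerate on $\ker\alpha_s$, the equation $\iota_{Y_s}d\alpha_s=\mu_s\alpha_s-\sigma$ has a unique holomorphic solution $Y_s\in\ker\alpha_s$. Because $\sigma|_L=0$, both $\mu_s$ and the right-hand side vanish on $L$, so $Y_s|_L=0$ and its flow fixes $L$ pointwise.

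The flow $\psi_s$ of $Y_s$ is then defined for $s\in[0,1]$ on some open neighborhood $\Omega\subset R\times\C^{2n}$ of $L$ (possibly pinching towards the ends of the noncompact $R$, which is all that is asserted), with $\psi_1^*\tilde\alpha=\lambda\alpha$ for a nowhere-vanishing $\lambda$. Setting $F:=F_0\circ\psi_1$ gives $F^*\beta=\psi_1^*\tilde\alpha=\lambda\alpha$, hence $F^*\xi=\ker(F^*\beta)=\ker\alpha$, which is precisely \eqref{eq:normal}; moreover $F|_L=F_0|_L=f$ because $\psi_1$ fixes $L$, and $F$ is a holomorphic immersion, an embedding when $f$ is, since $F_0$ has these properties and $\psi_1$ is a biholomorphism near $L$. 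The only genuinely global points to watch are the integration of the frame into $F_0$ and the domain of the Moser flow over the noncompact curve, both controlled by the triviality of holomorphic bundles over the open Riemann surface $R$ together with an exhaustion of $R$ by compact sets.
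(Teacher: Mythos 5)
Your argument is correct, and it reaches the normal form by a genuinely different route than the paper in the main step. Where you build an adapted symplectic frame of $f^*\xi$ in one linear-algebra pass --- Reeb field for $\partial_z$, a partner for $a=df(\partial_t)$ solving $\omega(dF_0(\partial_{y_1}),a)=-h$, and a symplectic basis of the $\omega$-orthogonal complement --- and then manufacture a tubular neighborhood $F_0$ with that prescribed $1$-jet, the paper instead starts from an arbitrary Docquier--Grauert immersion and normalizes the Taylor expansion of $\beta$ along $R\times\{0\}^{2n}$ by a finite sequence of explicit coordinate changes (gauge transformations by $GL$-valued maps supplied by a ``complete a maximal-rank matrix to an invertible one'' lemma, plus shears in $z$), discarding at each stage the terms that cannot contribute to $\beta\wedge(d\beta)^n$. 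The two normalizations are equivalent in content; yours packages the bookkeeping into symplectic Gram--Schmidt (valid globally because holomorphic bundles over an open Riemann surface are trivial, which is also the engine behind the paper's lemma), while the paper's version makes visible exactly which jet terms matter and avoids having to argue that a frame can be realized as the differential of a tubular-neighborhood map. Your Moser step also differs: you run the Gray-stability variant with $Y_s\in\ker\alpha_s$ and a conformal factor, obtaining $\psi_1^*\tilde\alpha=\lambda\alpha$, which suffices since the theorem only asserts $F^*\xi=\ker\alpha$; the paper adds a Reeb component $h_t\Theta_t$ and solves a noncharacteristic first-order PDE for $h_t$ so as to get the exact equality $\phi_1^*\beta=\alpha$, a slightly stronger and occasionally more convenient conclusion. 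Two points you should tighten when writing this up: (i) the scalar form $\beta$ and hence $\mathcal{R}$ and $\omega=d\beta|_\xi$ need a trivialization of $\nu=TX/\xi$ on a neighborhood of the (possibly immersed, possibly self-intersecting) curve, so one should first pull everything back to a Docquier--Grauert tube $\Omega\subset R\times\C^{2n}$, which is homotopy equivalent to $R$, and work there; (ii) ``integrating the frame'' into $F_0$ is most cleanly done by precomposing that fixed tubular-neighborhood immersion with the fiberwise-linear automorphism of $R\times\C^{2n}$ carrying the standard frame to yours, rather than by a separate cohomological patching argument. Neither point is a gap.
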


Recall that any holomorphic line bundle on an open Riemann surface $R$ is holomorphically trivial
according to the Oka-Grauert principle (see \cite[Theorem 5.3.1, p.\ 190]{Forstneric2011}); in particular,
$R$ admits a nowhere vanishing holomorphic $1$-form. Furthermore, by the Gunning-Narasimhan theorem 
(see \cite{GunningNarasimhan1967MA}  or  \cite[Corollary 8.12.2, p.\ 386]{Forstneric2011})
there exist plenty of holomorphic immersions $x_1\colon R\to\c$, i.e., holomorphic functions without critical 
points. Taking $\theta=dx_1$ and replacing $z$ by $z+\sum_{j=1}^n x_j y_j$, 
the normal form \eqref{eq:normal} changes to
\begin{equation}\label{eq:normal2}
	\alpha_0 = dz+ \sum_{i=1}^n x_i dy_i.
\end{equation}
This is formally the same as \eqref{eq:alpha0}, the difference being that $x_1$ is now 
a holomorphic immersion $R\to\C$ (and not just a local coordinate function), 
and the normal form \eqref{eq:normal2} is valid 
globally in a tube around the immersed Legendrian curve $f\colon R\to X$.

The existence of global holomorphic Darboux charts, given by Theorem \ref{th:normal}, 
has many applications, some of which are presented in the sequel.
It shows that the contact structure has no local invariants along a noncompact holomorphic Legendrian curve,
and any such curve extends to a local complex Legendrian submanifold of maximal dimension $n$;
in the Darboux chart \eqref{eq:alpha0} this extension is provided by $\{y=0,\ z=0\}=R\times \C^{n-1}_{x_2,\ldots,x_n}$. 
In the case $\dim X=3$, we see that all small Legendrian perturbations of the Legendrian curve 
$R\times \{0\}^2$ in $(R\times \C^2,\alpha)$ with  $\alpha = dz-y\theta$
are of the form $z=g(x)$ and $y=dg(x)/\theta(x)$, where $x\in R$ and $g$ is a holomorphic function on $R$.

%
%
Before proceeding, we wish to briefly address the question
that might be asked by the reader at this point: {\em How many complex contact manifolds are there?}
Examples and constructions of such manifolds can be
found in the papers \cite{Beauville1998,Beauville2011,Buczynski2010DM,Kebekus-etal2000,LandsbergManivel2007AJM,
LeBrun1995IJM,LeBrunSalamon1994IM,Kobayashi1959,Wolf1965,Ye1994IM}, among others.
Many of these constructions mimic those in smooth contact geometry
(for the latter, see e.g.\ Cieliebak and Eliashberg \cite{CieliebakEliashberg2012} and Geiges \cite{Geiges2008}).
On the other hand, some of the constructions in the complex world have no analogue in the real one.

If $\xi$ is a holomorphic contact structure on a complex manifold $X^{2n+1}$, then its normal line bundle $\nu=TX/\xi$ 
satisfies $\nu^{\otimes(n+1)}=K_X^{-1}$ where $K_X=\det(T^*X)$ is the canonical bundle of $X$
(see \cite{LeBrun1995IJM,LeBrunSalamon1994IM} and \eqref{eq:nu} below for a discussion of this topic). 
Assuming that $X$ is compact and $\nu\to X$ is a holomorphic line bundle satisfying this condition, 
the space of all contact structures on $X$ with normal bundle isomorphic to $\nu$ is a (possibly empty) 
connected complex manifold (see LeBrun \cite[p.\ 422]{LeBrun1995IJM}). It follows from 
Gray's theorem \cite{Gray1959AM} that all these contact structures are contactomorphic to each other. 
If $X$ is simply connected, it admits at most one isomorphism class of the root
$K_X^{-1/(n+1)}$, and hence at most one complex contact structure up 
to contactomorphisms (see \cite[Proposition 2.3]{LeBrun1995IJM}). 
Only two general constructions of compact complex contact manifolds are known: 

\begin{enumerate}
\item 
The projectivized cotangent bundle $\P(T^*Z)$ of any complex manifold $Z$ of dimension at least 2.
Recall that $T^*Z$ carries a tautological $1$-form $\eta$,
given in any local holomorphic coordinates $(z_1,\ldots,z_n)$ on $Z$ and associated fibre coordinates 
$(\zeta_1,\ldots,\zeta_n)$ on $T_z^*Z$ by $\eta= \sum_{j=1}^n \zeta_j dz_j$. 
Considering $(\zeta_1,\ldots,\zeta_n)$ as  projective coordinates on $\P(T_z^*Z)$, we get the contact 
structure $\xi=\ker\eta$ on $X=\P(T^*Z)$.
\vspace{1mm}
\item 
Let $G$ be a simple complex Lie group with the Lie algebra $\ggot$. The adjoint action of $G$ on the 
projectivization $\P(\ggot)$ of $\ggot$ has a unique closed orbit $X_{\ggot}$ which is contained in 
the closure of every other orbit; this orbit $X_{\ggot}$ is a contact Fano manifold. 
See the papers by Boothby \cite{Boothby1961}, Wolf \cite{Wolf1965} and Beauville \cite{Beauville1998,Beauville2007}.
The simplest example of this type is the contact structure on the projective spaces $\CP^{2n+1}$.
\end{enumerate}

%
%
It is conjectured that any projective contact manifold is of one of these two types 
(see Beauville \cite[Conjecture 6]{Beauville2011}). 
For projective threefolds, this holds true according to Ye \cite{Ye1994IM}.
Demailly proved \cite[Corollary 2]{Demailly2002} that if a compact K{\"a}hler manifold $X$ admits a 
contact structure, then its canonical bundle $K_X$ is not pseudo-effective (and hence not nef), 
and in particular the Kodaira dimension of $X$ equals $-\infty$. 
(The latter fact was also shown by  Druel \cite[Proposition 2]{Druel1998}.)
If in addition $b_2(X) = 1$  then $X$ is projective and hence $K_X$, not being  pseudo-effective,
is negative, i.e., $X$ is a Fano manifold \cite[Corollary 3]{Demailly2002}. 
Together with the results by Kebekus et al. \cite[Theorem 1.1]{Kebekus-etal2000}
it follows that  a projective contact manifold is Fano with $b_2=1$ or of type (1);
see \cite[Corollary 4]{Demailly2002} and Peternell \cite[Theorem 2.9]{Peternell2001}. 
Since a homogeneous Fano contact manifold is known to be of type (2), the 
above conjecture reduces to showing that every Fano contact manifold is homogeneous. 

The situation is more flexible on noncompact complex manifolds, and especially on Stein manifolds. 
A holomorphic $1$-form $\alpha$ which is contact at some point of $X^{2n+1}$ is contact
in the complement of a closed complex hypersurface (possibly empty) given by the vanishing of the $(2n+1)$-form 
$\alpha\wedge (d\alpha)^n$; this holds for a generic holomorphic $1$-form on a Stein manifold.
If $(Z,\omega)$ is a holomorphic symplectic manifold and $V$ is a holomorphic 
vector field on $Z$ satisfying $L_V\omega=\omega$ (such $V$ is called a {\em Liouville vector field}; here, $L_V$ denotes 
the Lie derivative), then the restriction of the $1$-form $\alpha=i_V\omega=\omega(V,\cdotp)$ to any 
smooth complex hypersurface $X\subset Z$ transverse to $V$ is a contact form on $X$.
For example, letting $\omega=\sum_{j=1}^{n+1} dz_{2j-1}\wedge dz_{2j}$ (the standard symplectic form on $\C^{2n+2}$)
and $V=\sum_{j=1}^{2n+2} z_j\di_{z_j}$ yields the standard contact structure on any hyperplane 
$H_k=\{z_k=1\}$ $(k=1,\ldots,2n+2$), and hence it induces a complex contact structure on $\CP^{2n+1}$.


We now present the applications of Theorem \ref{th:normal} obtained in this paper.

In Section \ref{sec:local} we prove the following general position result
whose proof combines Theorem \ref{th:normal} and the arguments from 
\cite[proof of Lemma 4.4]{AlarconForstnericLopez2017CM}.

\begin{theorem} \label{th:embedded}
Let $X$ be a complex contact manifold.
Every holomorphic Legendrian immersion $f\colon R\to X$ from an open Riemann surface
can be approximated, uniformly on any relatively compact subset $U\Subset R$, by 
holomorphic Legendrian embeddings $\tilde f \colon U\hra X$.
\end{theorem}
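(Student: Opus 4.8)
The plan is to use the global Darboux chart of Theorem~\ref{th:normal} to replace the nonlinear Legendrian condition by a linear one, and then to remove self-intersections by a general-position (transversality) argument. Fix a relatively compact domain $U'$ with $U \Subset U' \Subset R$ and apply Theorem~\ref{th:normal} to $f$ over $\overline{U'}$, obtaining an immersion $F\colon \Omega\to X$ on a neighborhood $\Omega\subset R\times\C^{2n}$ of the zero section, with $F|_{R\times\{0\}^{2n}}=f$ and $F^*\xi=\ker\alpha$ for $\alpha=dz-y_1\theta-\sum_{i=2}^n y_i\,dx_i$. Since $\dim_\C\Omega=2n+1=\dim_\C X$, the map $F$ is a local biholomorphism. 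I would perturb $f$ only inside this chart, keeping the parametrization by $R$, i.e.\ I look for $\tilde f(t)=F(t,v(t))$ where $v\colon R\to\C^{2n}$ is a small holomorphic section whose graph is $\alpha$-Legendrian. The crucial gain is that such graphs are described by free holomorphic potentials: one prescribes $x_2,\dots,x_n,y_2,\dots,y_n$ and $z$ freely, and then $y_1=\big(dz-\sum_{i=2}^n y_i\,dx_i\big)/\theta$ is determined (the holomorphic $1$-form in the numerator is divisible by the nonvanishing $\theta$). Because $R$ is an open Riemann surface, these potentials can be chosen by Runge approximation and Weierstrass interpolation to prescribe arbitrary $1$-jets at any finite set of points; consequently the full fibre value $v(p)\in\C^{2n}$ can be set to any prescribed value, independently at finitely many distinct points, while keeping $v$ as small as we wish on $\overline{U'}$.

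Next I would localize the self-intersections. Since $f$ is an immersion it is locally injective, so there is $\delta>0$ with $f$ injective on every subset of $\overline U$ of diameter $<\delta$; this injectivity persists under uniformly small holomorphic perturbations. Hence every self-intersection $\tilde f(p)=\tilde f(q)$, $p\ne q$, of a nearby $\tilde f$ must occur on the compact off-diagonal set $K=\{(p,q)\in\overline U\times\overline U:\dist(p,q)\ge\delta\}$. It therefore suffices to choose a small Legendrian $v$ for which the holomorphic map $K\ni(p,q)\mapsto(\tilde f_v(p),\tilde f_v(q))\in X\times X$ misses the diagonal $\Delta_X$.

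The core of the argument is then a parametric transversality argument for this difference map. The diagonal $\Delta_X\subset X\times X$ has complex codimension $2n+1$, while $\dim_\C K=2$; since $2<2n+1$, any map transverse to $\Delta_X$ has empty preimage, i.e.\ is injective over $K$. To apply the parametric transversality theorem I must verify that the family $\{\tilde f_v\}$ is ample enough, namely that for each fixed $(p,q)$ with $p\ne q$ the evaluation $v\mapsto(\tilde f_v(p),\tilde f_v(q))$ is a submersion onto a neighborhood of $(f(p),f(q))$. This is exactly what the independent free prescription of $v(p)$ and $v(q)$ from the first step provides: varying $v(p)$ moves $\tilde f(p)$ through the $2n$-dimensional slice $F(\{p\}\times\C^{2n})$, which is transverse to the curve, and similarly at $q$; together with the diagonal directions these span $T(X\times X)$ along $\Delta_X$ whenever the two branches are transverse at a double point. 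Parametric transversality then produces a dense set of small $v$ for which the difference map is transverse to $\Delta_X$, hence—by the dimension count—disjoint from it; by compactness of $K$ and openness of the embedding condition, one such small $v$ works over all of $K$ at once. The resulting $\tilde f=\tilde f_v$ is a holomorphic immersion (being a small perturbation of one) that is injective on $U$, hence an embedding $U\hra X$, and it approximates $f$ on $U$ as closely as desired.

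The step I expect to be the main obstacle is the ampleness verification: showing that the rigid Legendrian class still permits moving the curve freely and independently at distinct points. This is precisely where Theorem~\ref{th:normal} is indispensable, since it converts the Legendrian condition into the free-potential description that makes the interpolation problem on $R$ solvable and the evaluation map submersive. A secondary technical point is the treatment of tangential (non-transverse) double points, where fibre motion alone spans only a codimension-one subspace along $\Delta_X$; this is dealt with either by a preliminary perturbation rendering all double points transverse or by enlarging the parameter family to prescribe first-order data as well, following the arguments in the proof of \cite[Lemma~4.4]{AlarconForstnericLopez2017CM}.
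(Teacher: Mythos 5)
Your overall architecture is the same as the paper's: pass to the Darboux chart of Theorem~\ref{th:normal}, build a finite-dimensional holomorphic family of Legendrian perturbations, and remove double points by parametric transversality on the compact off-diagonal set together with the dimension count $\dim_\C(M\times M)=2<2n+1=\mathrm{codim}_\C\Delta_X$. Where you genuinely diverge is in how you generate Legendrian deformations: you take $z$ (and $x_i,y_i$, $i\ge 2$) as free potentials and recover $y_1=\bigl(dz-\sum_{i\ge2}y_i\,dx_i\bigr)/\theta$ algebraically, which eliminates any period problem. The paper instead prescribes the $y$-component as an explicit linear family and obtains $z$ by integration \eqref{eq:tildez}, and consequently spends most of its proof killing the periods of $\tilde y\,\phi[\xi_1h_1]^*\theta$ over a homology basis of $M$ via the period map \eqref{eq:period} and the implicit function theorem. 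Your parametrization is a real simplification on that point.

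There is, however, a genuine gap exactly where you predicted trouble: the ampleness verification. Because your perturbations are graphs over $R$ (the base point never moves), the parameter derivative of $v\mapsto(\tilde f_v(p),\tilde f_v(q))$ sweeps out only $V_p\oplus V_q$, where $V_p=dF_{(p,0)}(\{0\}\times\C^{2n})$ is the $2n$-dimensional fibre direction; this is never a submersion onto a neighborhood in $X\times X$, and $V_p\oplus V_q+T\Delta_X=T(X\times X)$ holds if and only if $V_p\neq V_q$. That condition is unrelated to transversality of the two branches of the curve (it can fail at a transverse double point and hold at a tangential one), so your first proposed fix does not address it; and prescribing first-order data of $v$ at $p$ changes $d\tilde f(p)$ but not the value $\tilde f(p)$, so it does not enlarge the image of the evaluation derivative either. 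Two correct repairs: (a) the paper's device, namely enlarging the family by a flow $\phi[\xi_1h_1]$ in the $R$-direction so that $\di_\xi H(q,\xi)|_{\xi=0}$ is an isomorphism onto all of $T_qY$ (this is precisely what reintroduces the period problem the paper then solves); or (b) observe that the parametric transversality theorem only requires the \emph{full} map $(p,q,c)\mapsto(\tilde f_c(p),\tilde f_c(q))$ on $K\times B$ to be transverse to $\Delta_X$, and the derivatives in the $(p,q)$ variables contribute the tangent lines to the two branches, which are complementary to $V_p$ and $V_q$ respectively; hence the family map is already a submersion at every preimage of $\Delta_X$ and no condition on double points is needed. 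With repair (b) your argument closes and is, if anything, shorter than the paper's.
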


In general one cannot approximate a holomorphic Legendrian  immersion $f\colon R\to X$ uniformly on compacts in $R$ by
holomorphic Legendrian embeddings $R\hra X$ of the whole Riemann surface. 
For example, if $f\colon \C\to\C^3$ is a proper holomorphic immersion with a single double point, 
then by choosing $X\subset \C^3$ to be an open neighborhood of $f(\C)$ which is very thin near infinity
we can ensure that $f(\C)$ is the only nonconstant complex line in $X$ up to a reparametrization. 
Approximation of Legendrian embeddings by global ones is possible in the model contact space 
$(\C^{2n+1},\alpha_0)$; moreover, there exist {\em proper} holomorphic Legendrian embeddings
$R\hra \C^{2n+1}$ from any open Riemann surface $R$ 
(see Alarc\'on et al. \cite[Theorem 1.1]{AlarconForstnericLopez2017CM}). The latter result also holds 
for the special linear group $SL_2(\c)$ endowed with its standard contact structure (see 
Alarc\'on \cite{Alarcon2016SL2C}). On the other hand, there exist examples of 
(Kobayashi) {\em hyperbolic} complex contact structures on $\C^{2n+1}$ for any $n\ge 1$ 
(see Forstneri{\v c} \cite{Forstneric2016Hyp}); in particular, these structures do not admit any 
nonconstant holomorphic Legendrian maps from $\C$ or $\C^*$. 

The proof of Theorem \ref{th:embedded} also provides local deformation theory of noncompact 
holomorphic Legendrian curves. In particular, the space of Legendrian deformations of a 
holomorphic Legendrian curve normalized by a bordered Riemann surface is an infinite dimensional 
complex Banach manifold (see Remark \ref{remark:deformation}).


Another application of Theorem \ref{th:normal} is that we can uniformly approximate 
a holomorphic Legendrian curve with smooth boundary in an arbitrary complex contact manifold $(X,\xi)$ 
by complete holomorphic Legendrian embeddings bounded by Jordan curves.
In order to formulate this result, we need to recall the following notions.
Assume that $\ggot$ is a Riemannian metric on $X$. An immersion $f\colon R \to X$ is said to be 
{\em complete} if the induced metric $f^*\ggot$ on $R$ is a complete metric.  
A compact bordered Riemann surface, $M$, is the same thing as a compact smoothly bounded domain
in an open Riemann surface $R$. A map $f\colon M\to  X$ from such a domain is said to be holomorphic 
if it extends to a holomorphic map on an open neighborhood of $M$in $R$. 

%
%
\begin{theorem}\label{th:main}
Let $(X,\xi)$ be a complex contact manifold, and let $M$ be a compact bordered Riemann surface.
Every holomorphic Legendrian immersion $f_0\colon M\to X$ can be approximated uniformly on $M$ 
by topological embeddings $f\colon M\to X$ such that $f|_{\mathring M} \colon \mathring M\to X$ is a complete 
holomorphic Legendrian embedding.
\end{theorem}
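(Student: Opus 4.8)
The plan is to transport the problem into the global Darboux chart furnished by Theorem \ref{th:normal} and then run a recursive Riemann--Hilbert construction that forces completeness while keeping the curve embedded and $C^0$-close to $f_0$.

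First I would arrange that $f_0$ is an \emph{embedding}. Since $M$ is a compact smoothly bounded domain in an open Riemann surface $R$, Theorem \ref{th:embedded} lets me replace $f_0$ by a holomorphic Legendrian embedding of a relatively compact neighborhood $U\Subset R$ of $M$ that is as close to $f_0$ as desired; as the conclusion is an approximation statement, there is no loss in assuming $f_0\colon M\hra X$ is already an embedding. Applying Theorem \ref{th:normal} to (a neighborhood of) $f_0$ then produces a holomorphic embedding $F\colon \Omega\to X$ of an open set $\Omega\subset R\times\C^{2n}$ containing $R\times\{0\}^{2n}$, with $F|_{R\times\{0\}^{2n}}=f_0$ and $F^*\xi=\ker\alpha$ for the normal form $\alpha=dz-y_1\theta-\sum_{i=2}^n y_i\,dx_i$. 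The point of this reduction is the flexibility it exposes: a holomorphic Legendrian curve in $(\Omega,\ker\alpha)$ parametrized by $M$ is obtained by prescribing the $2n-1$ functions $x_2,\dots,x_n,y_2,\dots,y_n,z$ freely and setting $y_1=(dz-\sum_{i=2}^n y_i\,dx_i)/\theta$, using that $\theta$ is nowhere vanishing. Thus Legendrian deformations supported in the fiber directions are essentially unconstrained, which is exactly what a completeness construction needs. It now suffices to produce the desired map into $(\Omega,\ker\alpha)$ and postcompose with $F$, since completeness is measured in any fixed metric $\ggot$ on $X$, the embedding $F$ is bilipschitz on compacta, and a $C^0$-small perturbation keeps the image inside $\Omega$.

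Next I would carry out the completeness step by a recursive scheme modeled on the treatment of complete Legendrian curves in $(\C^{2n+1},\alpha_0)$ in \cite{AlarconForstnericLopez2017CM}. Fixing a point $p_0\in\mathring M$, I build a sequence of holomorphic Legendrian embeddings $f_k\colon M\to\Omega$ with $f_k\to f_{k-1}$ in $C^0(M)$ so fast that the amplitudes are summable (guaranteeing a continuous limit), while the intrinsic boundary distance $\dist_{f_k}(p_0,\partial M)$ grows without bound. Each step is a Legendrian Riemann--Hilbert deformation: along a collar of $\partial M$ I insert rapidly oscillating terms into the free data $z,x_i,y_i$, whose $C^0$ size is tiny but whose derivatives are large, so that the induced metric $f_k^*\ggot$ is inflated near the boundary and every path from $p_0$ to $\partial M$ is lengthened by a definite amount. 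Because these deformations live entirely in the fiber directions and respect the explicit Legendrian parametrization above, they stay inside the Legendrian class; because they are $C^0$-small they stay inside $\Omega$ and contribute a convergent series. Summing the length increments over the stages yields $\dist_f(p_0,\partial M)=\infty$ for the limit, i.e.\ $f|_{\mathring M}$ is complete.

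Simultaneously I would preserve embeddedness: at each stage a general-position perturbation in the spirit of Theorem \ref{th:embedded} (combining Theorem \ref{th:normal} with the transversality argument of \cite[proof of Lemma 4.4]{AlarconForstnericLopez2017CM}) keeps $f_k$ an embedding, and the same dimension count applied to the real one-dimensional boundary $\partial M$ inside the at-least-six-real-dimensional $X$ makes $f_k|_{\partial M}$ injective with image a finite union of Jordan curves. Passing to the limit, $f\colon M\to\Omega$ is continuous (by summability of the perturbations), $f|_{\mathring M}$ is a complete holomorphic Legendrian embedding, and the boundary control together with completeness (which places $f(\partial M)$ at infinite intrinsic distance and thereby prevents interior points from colliding with the boundary image) upgrades injectivity of $f$ to a topological embedding of the compact surface $M$. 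Postcomposing with $F$ gives the required map into $X$. The main obstacle is the completeness step: one must enlarge the boundary distance within the rigid Legendrian class while retaining $C^0$-control, and the resolution is precisely the explicit Legendrian parametrization provided by Theorem \ref{th:normal}, which turns the fiber variables into free holomorphic parameters and thereby makes the Jorge--Xavier type oscillation and the Riemann--Hilbert collar deformations available without leaving the space of Legendrian curves.
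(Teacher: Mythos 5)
Your overall architecture agrees with the paper's: reduce to the Darboux normal form of Theorem \ref{th:normal}, import the completeness construction already established for the model contact space $(\C^{2n+1},\alpha_0)$ in \cite{AlarconForstnericLopez2017CM}, restore embeddedness by general position as in Theorem \ref{th:embedded}, and obtain the limit map by a recursion with summable $C^0$-amplitudes and divergent intrinsic boundary distance (the paper packages one step of this recursion as Lemma \ref{lem:elongate}). However, there is a genuine gap in your completeness step. You propose to realize the boundary-distance growth by deformations ``supported in the fiber directions,'' i.e.\ among Legendrian graphs over $M$, where $x_2,\dots,x_n,y_2,\dots,y_n,z$ are free and $y_1=(dz-\sum_{i\ge2}y_i\,dx_i)/\theta$ is determined. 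In the lowest-dimensional case $\dim X=3$ (which the theorem must cover) this leaves only $z$ free, with $y_1=dz/\theta$ enslaved to it; a Jorge--Xavier type oscillation in $y_1$ cannot be prescribed independently, and producing completeness through second derivatives of a $C^0$-small $z$ alone is not what \cite[Lemma 6.5]{AlarconForstnericLopez2017CM} provides --- that lemma perturbs \emph{all} components of the curve, including the base coordinate $x$. So the claimed resolution of ``the main obstacle'' does not actually deliver the elongation step as stated.

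The paper closes exactly this gap differently. It chooses a Gunning--Narasimhan immersion $x_0\colon R\to\C$, writes the Darboux form as $\alpha=G^*\alpha_0$ for $G(u,y,z)=(x_0(u),y,z)$, and applies \cite[Lemma 6.5]{AlarconForstnericLopez2017CM} to the genuine Euclidean Legendrian curve $g_0=(x_0,0,0)$ in $(\C^3,\alpha_0)$, obtaining $g_1=(x_1,y_1,z_1)$ whose first component $x_1$ is only \emph{close} to $x_0$. The new ingredient needed to return to the chart $Y=R\times\B^2$ is Lemma \ref{lem:observation}: since $x_0$ has a positive radius of injectivity on the compact set $M$, the perturbed component factors as $x_1=x_0\circ\phi$ for a unique holomorphic $\phi\colon M\to R$ close to the identity, and $h=(\phi,y_1,z_1)$ is then an $\alpha$-Legendrian immersion in $Y$ with the required estimates. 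Your proposal omits this lifting step precisely because it assumes the base coordinate never moves; either you must supply an argument that completeness can be forced with the base frozen (not supported by the cited lemma), or you need an analogue of Lemma \ref{lem:observation} to absorb the motion of the base coordinate back into a reparametrization over $R$.
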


Since $f_0(M)$ is a compact subset of $X$, the notion of completeness of complex curves $f\colon M\to X$
uniformly close to $f_0$ is independent of the choice of a metric on $X$.

Theorem \ref{th:main}  may be compared with the results on the Calabi-Yau problem in the theory of 
conformal minimal surfaces in $\R^n$ and null holomorphic curves in $\C^n$; 
see Alarc{\'o}n et al.\ \cite{AlarconDrinovecForstnericLopez2015PLMS,AlarconForstnericLopez2017Memoirs} 
and the references therein for recent developments on this subject.

Theorem \ref{th:main} is proved in Section \ref{sec:proof-main}.
The special case with $(X,\xi)$ the model contact space $(\C^{2n+1},\alpha_0)$ 
(see \eqref{eq:alpha0}) was obtained in \cite[Theorem 1.2]{AlarconForstnericLopez2017CM}.
The Darboux charts, furnished by Theorem \ref{th:normal}, make it possible to extend
this result to any complex contact manifold.

In Section \ref{sec:CNT} we consider isotropic complex submanifolds $M$ of higher dimension in a
complex contact manifold, and we prove a contact neighborhood theorem in the case when
$M$ is a Stein submanifold (see Theorem \ref{th:contactnbd}). 
In particular, we obtain the following result.

\begin{theorem}\label{th:cor:contactnbd}
Let $(X_i,\xi_i)$ $(i=0,1)$ be complex contact manifolds of the same dimension.
If $M_i \subset X_i$ $(i=0,1)$ are biholomorphic Legendrian (i.e., isotropic and of maximal dimension) 
Stein submanifolds such that $\nu_i=TX_i/\xi_i$ is trivial over $M_i$ for $i=0,1$, 
then $M_0$ and $M_1$ have holomorphically contactomorphic neighborhoods.
\end{theorem}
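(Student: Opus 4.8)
The plan is to deduce the statement from the contact neighbourhood theorem (Theorem~\ref{th:contactnbd}), which produces around any Legendrian Stein submanifold $M$ with trivial normal bundle $\nu$ a holomorphic normal form depending only on the biholomorphism class of $M$. The model is the holomorphic $1$-jet bundle $J^1 M = T^*M \oplus (M\times\C)$, with fibre coordinate $z$ on the trivial factor, tautological $1$-form $\lambda$ on $T^*M$, and contact form $\alpha_M = dz - \lambda$; its zero section is Legendrian, and $\alpha_M \wedge (d\alpha_M)^n \neq 0$ since $(-d\lambda)^n$ is a volume form in the fibre directions of $T^*M$. Granting that a neighbourhood of $M_i$ in $X_i$ is holomorphically contactomorphic to a neighbourhood of the zero section of $J^1 M_i$, carrying $M_i$ to the zero section, the corollary is purely formal: a biholomorphism $\phi\colon M_0\to M_1$ lifts canonically to the cotangent contactomorphism $\hat\phi\colon J^1 M_0\to J^1 M_1$, $\hat\phi(p,\mu,z)=(\phi(p),(d\phi_p^{-1})^*\mu,z)$, which preserves the tautological forms and hence satisfies $\hat\phi^*\alpha_{M_1}=\alpha_{M_0}$ and maps zero section to zero section. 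Composing the two neighbourhood contactomorphisms with $\hat\phi$ yields the required holomorphic contactomorphism between neighbourhoods of $M_0$ and $M_1$.

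It remains to explain the neighbourhood theorem, which carries all the content; fix one index and write $(X,\xi,M,\nu)$. Since $M$ is Stein it has arbitrarily small Stein neighbourhoods in $X$ (Siu's theorem); after shrinking, and using that $\nu$ is trivial over $M$ together with the Oka--Grauert principle, we obtain a scalar-valued holomorphic contact form $\alpha$ with $\xi=\ker\alpha$ on a Stein neighbourhood of $M$, where $M$ isotropic gives $\alpha|_{TM}=0$ and hence $d\alpha|_{TM}=0$. The first task is to identify the holomorphic normal bundle $N=TX|_M/TM$: the Reeb field $R$ (defined by $\alpha(R)=1$, $i_R d\alpha=0$) spans a line transverse to $\xi$, while on $\xi|_M$ the symplectic form $d\alpha$ exhibits $TM$ as a Lagrangian subbundle of maximal rank $n$, so $v\mapsto d\alpha(v,\cdot)|_{TM}$ gives an isomorphism $\xi|_M/TM \xrightarrow{\ \sim\ } T^*M$. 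Altogether $N\cong T^*M\oplus\C$, which is precisely the normal bundle of the zero section in $J^1 M$. By the Docquier--Grauert holomorphic tubular neighbourhood theorem for Stein submanifolds this bundle isomorphism is realised by a biholomorphism $\Phi_0$ from a neighbourhood of $M$ onto a neighbourhood of the zero section in $J^1 M$ fixing $M$, and by construction $\Phi_0$ matches $\alpha$ and $\alpha_M$ to first order along the zero section.

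Finally I would run Moser's path method in the holomorphic category, exactly as in Moser's proof of Darboux's theorem \cite{Moser1965TAMS} which, as recalled in the introduction, adapts to the holomorphic setting. Set $\alpha_0=\Phi_0^*\alpha$ and $\alpha_1=\alpha_M$, both holomorphic contact forms near the zero section agreeing to first order there, and interpolate $\alpha_t=(1-t)\alpha_0+t\alpha_1$. Seeking a flow $\psi_t$ with $\psi_t^*\alpha_t=\lambda_t\,\alpha_0$ for nonvanishing holomorphic $\lambda_t$ leads, upon differentiation, to the contact Moser equation $L_{V_t}\alpha_t=\sigma_t\,\alpha_t-\dot\alpha_t$ for the generating time-dependent vector field $V_t$; by nondegeneracy of the contact form this is solved \emph{pointwise} for a holomorphic $V_t$ and conformal factor $\sigma_t$, with no global $\bar\partial$-problem arising. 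Because $\alpha_1-\alpha_0$ vanishes along the zero section, so does $V_t$, whence the zero section is fixed and the flow $\psi_t$ exists on a neighbourhood of it for all $t\in[0,1]$; then $\psi_1\circ\Phi_0$ is the desired contactomorphism. The main obstacle is precisely this last step together with the preceding tubular neighbourhood, namely securing the holomorphic solvability and the existence of the Moser flow on a (necessarily Stein) neighbourhood of $M$, which is where the Stein hypothesis is essential; once the normal form is established, the reduction in the first paragraph is immediate.
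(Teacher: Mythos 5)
Your argument is correct, and its skeleton is the same as the paper's: the paper disposes of this statement in one line by invoking Theorem \ref{th:contactnbd} (the conformal symplectic normal bundle \eqref{eq:CSN} has rank zero for Legendrian $M$, so the hypothesis on $\Phi$ is vacuous), and your proof essentially reconstitutes the proof of Theorem \ref{th:contactnbd} in this special case --- trivialize $\nu$ over a Stein neighbourhood via Oka--Grauert, identify $N(M,X)\cong (M\times\C)\oplus T^*M$ using the Reeb field and the pairing \eqref{eq:Tstar}, apply Docquier--Grauert--Siu, and finish with Moser. Two local choices differ, both legitimately. First, you route everything through the universal model $J^1M=T^*M\oplus(M\times\C)$ with the form $dz-\lambda$ and then lift the biholomorphism $M_0\to M_1$ to the cotangent lift; the paper instead compares the two embeddings directly via a normal-bundle isomorphism $\Theta$ covering $f$ built from the decomposition \eqref{eq:normalbundle}. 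These are equivalent, and your packaging makes the final reduction cleanly formal. Second, your Moser step is the Gray-stability version, solving $L_{V_t}\alpha_t=\sigma_t\alpha_t-\dot\alpha_t$ pointwise with a conformal factor and concluding $\psi_1^*\alpha_1=\lambda_1\alpha_0$; this is purely algebraic and suffices here, since the theorem only asks for contactomorphic neighbourhoods (preservation of $\xi$, not of the contact form). The paper's Proposition \ref{prop:tangent} works harder: a two-stage exact Moser argument using the generalized Poincar\'e lemma on a Stein neighbourhood, yielding $\psi^*\beta=\alpha$ with $T\psi=\mathrm{Id}$ on $TX|_M$, which is what is needed for the sharper conclusion $TF|_{CSN(M_0,X_0)}=\Phi$ in the general isotropic (non-Legendrian) case but is not required for the present statement. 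One small point you assert without justification, but which does hold and is verified in the paper, is that the tubular-neighbourhood map matches the two contact forms together with their differentials along $M$ (agreement of the differentials on the Reeb direction follows from $i_Rd\alpha=0$, and on $\xi|_M$ it is exactly what the identification \eqref{eq:Tstar} is designed to achieve); this first-order agreement is what makes the interpolated forms $\alpha_t$ contact near $M$ and makes $V_t$ vanish on $M$.
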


Moreover, in the special case when the complex isotropic submanifold $M\subset (X,\xi)$ is Stein and
contractible, we find a Darboux chart around $M$ similar to those furnished by Theorem \ref{th:normal}; 
see Theorem \ref{th:Darboux2}.

It seems that the results in this paper are the first of their kind in the holomorphic case. On the other hand,
contact neighborhood theorems of isotropic submanifolds are well known in the smooth case. 
For example, two smooth diffeomorphic isotropic submanifolds 
with isomorphic conformal symplectic normal bundles have contactomorphic neighborhoods
(see Geiges \cite[Theorem 2.5.8]{Geiges2008}). In particular,  diffeomorphic closed 
Legendrian submanifolds (i.e., isotropic submanifolds of maximal dimension) have 
contactomorphic neighborhoods (see \cite[Corollary 2.5.9]{Geiges2008}).
For example, if $S^1\subset (X^3,\xi)$ is a Legendrian knot in a smooth contact 3-manifold,
then with a coordinate $x$ along $S^1$ and coordinates $y,z$ in slices transverse to $S^1$,
the contact form 
\[
	\cos x\cdotp dy - \sin x\cdotp dz
\]
provides a model for a contact neighbourhood of $S^1$ in $X$ (see \cite[Example 2.5.10]{Geiges2008}).
By  the proof of Theorem \ref{th:normal} we can also get a contact neighborhood with the form 
$dz-ydx$. However, a crucial difference appears  between the real and the complex case: 
there is no smooth immersion $S^1\to\R$, so $dx$ only has the meaning as a nonvanishing $1$-form on $S^1$. 
In particular, the $1$-form $dz+xdy$ is not contact for any smooth function $x\colon S^1\to\R$, and there are no
smooth contact neighborhoods  \eqref{eq:normal2} of a smooth Legendrian knot.

It is natural to ask what could be said about contact neighborhoods of compact holomorphic Legendrian curves
and, more generally, of higher dimensional compact isotropic  complex submanifolds.
According to Bryant \cite[Theorem G]{Bryant1982JDG} (see also Segre \cite{Segre1926}),
every compact Riemann surface embeds as a complex Legendrian curve in $\CP^3$.
The first question to answer is which closed Legendrian curves in $\CP^3$ admit Darboux type neighborhoods.
One major obstacle is that the tubular neighborhood theorem  fails in general for compact complex
submanifolds.  In another direction, the deformation theory of certain compact complex Legendrian submanifolds
has been studied by Merkulov \cite{Merkulov1994MRL} by using  Kodaira's deformation
theory approach \cite{Kodaira1962AM}. He showed that a compact complex Legendrian submanifold $M$ of $(X,\xi)$ with
$H^1(M,\xi)=0$ is contained in a complete analytic family of compact complex Legendrian submanifolds of $X$.


\section{Normal form of a contact structure along an immersed Legendrian curve}
\label{sec:normal}

In this section we prove Theorem \ref{th:normal}. We shall repeatedly use the following known lemma;
we include a sketch of proof for the sake of completeness.

\begin{lemma}\label{lem:GL}
Let $R$ be an open Riemann surface, and let $A$ be a holomorphic $m\times p$ matrix-valued
function on $R$, with $1\le m<p$, which has maximal rank $m$ at every point of $R$. 
Then there exists a holomorphic map $B\colon R\to GL_p(\C)$
such that $A(x)\cdotp B(x)=(I_m,0)$ holds for all $x\in R$, where $I_m$ is the $m\times m$ identity matrix. 
\end{lemma}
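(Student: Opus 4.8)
The plan is to reformulate the statement in the language of holomorphic vector bundles over $R$ and then invoke the Stein theory available on an open Riemann surface. Regard $A$ as a holomorphic homomorphism $A\colon R\times\C^p\to R\times\C^m$ between trivial bundles. Writing the desired $B=(B_1\mid B_2)$ in block form, where $B_1$ is $p\times m$ and $B_2$ is $p\times(p-m)$, the condition $A B=(I_m,0)$ splits into $A B_1=I_m$ and $A B_2=0$. Thus I must produce a holomorphic right inverse $B_1$ of $A$ together with a holomorphic frame $B_2$ of $\ker A$, arranged so that the $p$ columns of $B$ are linearly independent at every point. The naive candidate $B_1=A^{t}(A A^{t})^{-1}$ is useless here, since the holomorphic matrix $A A^{t}$ need not be invertible (there is no positive-definiteness over $\C$); this is precisely the obstacle, and it is where the hypothesis that $R$ is an \emph{open} Riemann surface enters.

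Because $A$ has locally constant (maximal) rank $m$ on all of $R$, its kernel $E:=\ker A$ is a holomorphic subbundle of $R\times\C^p$ of rank $p-m$, and we obtain a short exact sequence of holomorphic vector bundles
\begin{equation*}
	0\longrightarrow E\longrightarrow R\times\C^p \stackrel{A}{\longrightarrow} R\times\C^m\longrightarrow 0.
\end{equation*}
Since $R$ is an open Riemann surface, it is a one-dimensional Stein manifold, so Cartan's Theorem B gives $H^1(R,\mathcal{F})=0$ for the sheaf of sections of every holomorphic vector bundle over $R$. Applying $\mathrm{Hom}(R\times\C^m,-)$ to the sequence and using $H^1(R,\mathrm{Hom}(R\times\C^m,E))=0$, the identity of $R\times\C^m$ lifts to a holomorphic section, i.e.\ the sequence splits; a splitting is exactly a holomorphic $p\times m$ matrix $B_1$ with $A B_1=I_m$. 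By the Oka--Grauert principle every holomorphic vector bundle over an open Riemann surface is holomorphically trivial, so $E$ admits a global holomorphic frame, i.e.\ a holomorphic $p\times(p-m)$ matrix $B_2$ of rank $p-m$ at each point with $A B_2=0$.

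It remains to set $B:=(B_1\mid B_2)$ and verify the two claims. By construction $A B=(A B_1\mid A B_2)=(I_m\mid 0)$, which is the required identity. For invertibility, fix $x\in R$: from $A(x)B_1(x)=I_m$ the $m$ columns of $B_1(x)$ are linearly independent and span a subspace meeting $\ker A(x)$ only in $0$, while the $p-m$ columns of $B_2(x)$ form a basis of $\ker A(x)$; hence the $p$ columns of $B(x)$ span $\C^p$ and $B(x)\in GL_p(\C)$. The crux of the argument is thus the pair of global existence results on the Stein curve $R$---the splitting of the exact sequence and the triviality of $\ker A$---both furnished by Cartan's Theorem B / the Oka--Grauert principle; the remaining linear-algebra verification is routine.
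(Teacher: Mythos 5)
Your proof is correct and rests on exactly the same two ingredients as the paper's: the splitting of short exact sequences of holomorphic vector bundles over a Stein manifold (Cartan's Theorem B) and the Oka--Grauert triviality of holomorphic vector bundles over an open Riemann surface. The only difference is one of packaging: the paper completes the row span of $A$ to a global holomorphic frame of $R\times\C^p$ and sets $B=\widetilde{A}^{-1}$ for the resulting invertible matrix $\widetilde{A}$, whereas you assemble $B$ directly from a holomorphic right inverse of $A$ and a global frame of $\ker A$.
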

\begin{proof}
Denote the rows of $A$ by $a^j$ for $j=1,\ldots,m$; these are holomorphic maps $R\to \C^p$ 
such that the vectors $a^j(x)$ are linearly independent at every point $x\in R$. 
We must find holomorphic maps $a^{m+1},\ldots,a^p \colon R\to\C^p$
such that the matrix 
function $\wt A\colon R\to \C^{p\times p}$ with the rows $a^1,a^2, \ldots,a^p$
is invertible at each point; then $B=\wt A^{-1}$ satisfies the lemma. 

Recall that every holomorphic vector bundle on an open Riemann surface $R$ is trivial by the Oka-Grauert principle
(see \cite[Theorem 5.3.1, p.\ 190]{Forstneric2011}), 
and every holomorphic vector subbundle $E'$ of a holomorphic vector bundle $E$ over a
Stein manifold splits $E$, i.e., we have $E=E'\oplus E''$ where $E''$ is another holomorphic vector subbundle of $E$
(this follows from Cartan's Theorem B, see \cite[Corollary 2.4.5, p.\ 54]{Forstneric2011}). Let $E=R\times \C^p$,
and let $E'\subset E$ be the holomorphic rank $m$ subbundle spanned by the rows of the matrix 
$A(x)$ at each point $x\in R$. Then $E=E'\oplus E''$ where $E''$ is a trivial bundle of rank $p-m$; 
thus it is generated by $p-m$ global holomorphic sections $a^{m+1},\ldots,a^p\colon R\to \C^p$. This proves
the lemma.
\end{proof}

\begin{remark}\label{rem:lemGL}
The conclusion of Lemma \ref{lem:GL} holds for any Stein manifold $R$ on which every complex
vector bundle is topologically trivial; for instance, on a contractible Stein manifold.
Indeed, by the Oka-Grauert principle 
it follows that every holomorphic vector bundle on $R$ is holomorphically trivial, and hence
the proof of Lemma \ref{lem:GL} applies verbatim. 
\end{remark}

%
%
\begin{proof}[Proof of Theorem \ref{th:normal}]
The normal bundle of the immersion $f\colon R\to X$ is a holomorphic vector bundle of rank $2n$
over $R$, hence a trivial bundle by the Oka-Grauert principle (see \cite[Theorem 5.3.1]{Forstneric2011}).
By the Docquier-Grauert  tubular neighborhood theorem (see \cite[Theorem 3.3.3]{Forstneric2011}),
there are a Stein open neighborhood $\Omega \subset R\times \C^{2n}$ of $R\times \{0\}^{2n}$ 
and a holomorphic immersion $F\colon \Omega \to X$ with $F|_{R\times \{0\}^{2n}}=f$. 
Furthermore, $\Omega$ can be chosen to have convex fibers, so it is homotopy equivalent to $R$.
Hence, every holomorphic vector bundle on $\Omega$ is holomorphically trivial by the 
Oka-Grauert principle. 
In particular, the complex line bundle $\nu=T\Omega/F^*\xi$ is trivial,  
and the quotient projection $\beta\colon T\Omega\to \nu\cong \Omega\times\C$ with 
$\ker\beta=F^*\xi$ is a holomorphic $1$-form on $\Omega$ defining the contact structure $F^*\xi$.
(Compare with \eqref{eq:normalbundle}.)  The contact condition is that 
\[
	\beta \wedge(d\beta)^n \neq 0.
\]
We shall find a holomorphic change of coordinates in a neighborhood 
of $R\times \{0\}^{2n}$ which fixes $R\times \{0\}^{2n}$ pointwise and reduces $\beta$ to the form \eqref{eq:normal}. 
To simplify the notation, the neighborhood in question will always be called $\Omega$, but the reader
should keep in mind that it is allowed to shrink around $R\times \{0\}^{2n}$ during the proof. 

Let $x$ denote points in $R$, and let $\zeta=(\zeta_1,\ldots,\zeta_{2n})$ be complex coordinates on $\C^{2n}$. 
Along $R\times \{0\}^{2n}=\{\zeta=0\}$ we have that
\[
	\beta(x)=\sum_{j=1}^{2n} a_j(x) d\zeta_j,\quad x\in R
\]
for some holomorphic functions $a_j\in\Oscr(R)$ without common zeros
(since $\ker\beta(x)$ is the contact hyperplane at $(x,0)\in R\times \{0\}^{2n}$). The $1$-form
$\theta$ does not appear in the above expression since $R\times \{0\}^{2n}$ is a
$\beta$-Legendrian curve. Let $a=(a_1,\ldots,a_{2n})\colon R \to \C^{2n}\setminus \{0\}$. 
We introduce new coordinates $\zeta'=B(x)^{-1}\zeta$, where the holomorphic map
$B\colon R \to GL_{2n}(\C)$ satisfies $a(x)\cdotp B(x)=(1,0,\ldots,0)$
for all $x\in R$; such $B$ exists by Lemma \ref{lem:GL}. Dropping the primes, this transforms $\beta$ 
along $R\times \{0\}^{2n}$ to the constant $1$-form $d\zeta_1$. 
Geometrically speaking, this amounts to rotating the contact plane $\xi_x$ for 
$x\in R\times \{0\}^{2n}$ to the constant position given by $d\zeta_1=0$. 
Denoting the variable $\zeta_1$ by $z$, we have $\beta=dz$ at all points of $R\times \{0\}^{2n}$.

We now consider those terms in the Taylor expansion of $\beta$ along $R\times \{0\}^{2n}$
which give a nontrivial contribution to the coefficient function of the $(2n+1)$-form  $\beta \wedge(d\beta)^n$. 
Since the coefficient of $dz$ equals $1$ on $R\times \{0\}^{2n}$, it is a nowhere vanishing holomorphic 
function in a neighborhood of this set, and we simply divide $\beta$ by it. We thus have
%
%
\begin{equation}\label{eq:Taylor1}
	\beta=dz+ \biggl(\sum_{j=2}^{2n} b_j(x)\zeta_j \biggr) \theta(x) 
	+ \sum_{j,k=2}^{2n} c_{j,k}(x)\zeta_k\, d\zeta_j + \tilde \beta,
\end{equation}
where the coefficients $b_j$ and $c_{j,k}$ are holomorphic functions on $R$.
The $1$-form $\tilde \beta$ (the remainder) contains all terms $\zeta_jd\zeta_j$, terms whose coefficients are of order 
$\ge 2$ in the variables  $\zeta_2,\ldots,\zeta_{2n}$, or terms that contain the $z$ variable; 
such terms disappear in $\beta \wedge(d\beta)^n$ at all points of $R \times \{0\}^{2n}$.

We claim that the functions $b_2,\ldots,b_{2n}$  in \eqref{eq:Taylor1} have no common zeros in $R$. 
Indeed, at a common zero $x_0\in R$ of these functions, the form $d\beta$ at the point $(x_0,0)$
does not contain the term $\theta(x_0)$ and hence $\beta \wedge(d\beta)^n$ vanishes, a contradiction. 
Write $\zeta'=(\zeta_2,\ldots,\zeta_{2n})$. Applying Lemma \ref{lem:GL} with the
row matrix $b=(b_2,\ldots,b_{2n})\colon R\to\C^{2n-1} \setminus\{0\}$
gives a holomorphic change of coordinates of the form 
\[
	(x,z,\zeta')\mapsto (x,z,B(x)\zeta'), \quad B(x)\in GL_{2n-1}(\C)
\]
such that the coefficient of $\theta$ becomes $-\zeta_2$, and hence 
\begin{equation}\label{eq:Taylor2}
	\beta=dz - \zeta_2 \theta +  \sum_{j,k=2}^{2n} c_{j,k}(x)\zeta_k \, d\zeta_j + \tilde \beta
\end{equation}
for some new coefficients $c_{j,k}$. 
%
%
%
%
Note that $(d\beta)^n$ contains the factor $d(\zeta_2\theta)=d\zeta_2 \wedge\theta$
(since a nontrivial differential in the $R$-direction does not appear in any other way).
Hence, the term with $d\zeta_2$ and all terms containing $\zeta_2d\zeta_j$ with $j> 2$ 
in \eqref{eq:Taylor2} can be placed into the remainder $\tilde\beta$
since they do not contribute to $(d\beta)^n$. Renaming the variable $\zeta_2$ by $y_1$ we thus have
%
%
\begin{equation}\label{eq:Taylor3}
	\beta=dz -  y_1 \theta + \sum_{j=3}^{2n}  \left( \sum_{k=4}^{2n} c_{j,k}(x)\zeta_k \right)  d\zeta_j  + \tilde \beta.
\end{equation}
If $n=1$ (i.e., $\dim X=3$), we are finished with the first part of the proof and proceed to the second part
given below.  

Assume now that $n>1$. We begin by eliminating the variable $\zeta_3$ 
from the coefficients of the differentials $d\zeta_4,\ldots,d\zeta_{2n}$ by the shear 
%
%
\[
	z' = z+ \sum_{j=4}^{2n} c_{j,3}(x)\zeta_3 \zeta_j.
\]
This ensures that  the functions $c_{3,k}$ in the coefficient of $d\zeta_3$ in \eqref{eq:Taylor3} have no common zeros 
on $R$ (since at such point $d\beta$ would not contain $d\zeta_3$).
Applying Lemma \ref{lem:GL} we change the coefficient of $d\zeta_3$ to $-\zeta_4$ 
by a linear change of the variables $\zeta_4,\ldots, \zeta_{2n}$ with a holomorphic dependence on $x\in R$. 
Set $x_2=\zeta_3$ and $y_2=\zeta_4$. By the same argument as in the previous step, 
we can move the term with $dy_2=d\zeta_4$, as well as  all terms containing $y_2=\zeta_4$ in the subsequent differentials 
$d\zeta_5,\ldots,d\zeta_{2n}$, to the remainder $\tilde \beta$. This gives
\begin{equation}\label{eq:Taylor4}
	\beta=dz - y_1\theta - y_2dx_2 + \sum_{j,k=5}^{2n} c_{j,k}(x)\zeta_k\, d\zeta_j + \tilde \beta.
\end{equation}
It is clear that this process can be continued, and in finitely many steps we obtain
\[
	\beta = dz- y_1\theta - \sum_{i=2}^n y_i dx_i + \tilde \beta = \alpha +\tilde \beta,
\]
where $\alpha$ is the normal form \eqref{eq:normal}. 

We now complete the proof by applying Moser's method
\cite{Moser1965TAMS} in order to get rid of the remainder $\tilde \beta$.
Consider the following family of holomorphic $1$-forms on $\Omega$:
\[
	\alpha_t=\alpha+t(\beta-\alpha)= \alpha + t\tilde \beta, \quad  t\in[0,1].
\]
Note that $\alpha_0=\alpha$, $\alpha_1=\beta$, and for all $t\in [0,1]$ we have
\[
	\alpha_t=\alpha \quad  \text{and} \quad
	\alpha_t\wedge (d\alpha_t)^{n} = \alpha\wedge(d\alpha)^n\ \ 
	\text{on}\ \ R\times \{0\}^{2n}.
\]
The second identity holds because, by the construction, $\tilde \beta$ contains only
terms which do not contribute to $\alpha_t\wedge (d\alpha_t)^{n}$. Hence, $\alpha_t$ is a contact form in a  
neighborhood of $R\times \{0\}^{2n}$, still denoted $\Omega$, determining a contact structure $\xi_t=\ker\alpha_t$ 
for every $t\in [0,1]$,  and $\dot\alpha_t=\beta-\alpha$ vanishes on $R\times \{0\}^{2n}$. (The dot indicates the $t$-derivative.)
We shall find a time-dependent holomorphic vector field $V_t$ on a neighborhood of 
$R\times \{0\}^{2n}$ that vanishes on $R\times \{0\}^{2n}$ and whose flow $\phi_t$ satisfies
\begin{equation}\label{eq:flow}
	\phi_t^* \alpha_t = \alpha,\quad  t\in[0,1]
\end{equation}
and the initial condition
\begin{equation}\label{eq:initialcond}
	\phi_t(x,0,\ldots,0)=(x,0,\ldots,0),\quad x\in R,\ t\in[0,1].
\end{equation}
At time $t=1$ we shall then get $\phi_1^*\beta = \alpha$ in an open neighborhood of $R\times \{0\}^{2n}$,
thereby completing the proof of the theorem. 

Let $\Theta_t$ denote the Reeb vector field of contact form  $\alpha_t$
(see \cite[p.\ 5]{Geiges2008}), i.e., the unique holomorphic vector field satisfying the conditions
\[
	\Theta_t \,\rfloor\, \alpha_t = \alpha_t(\Theta_t) =1\quad \text{and}\quad  
	\Theta_t \,\rfloor\, d\alpha_t= \langle d\alpha_t, \Theta_t\, \wedge\,\cdotp\rangle =0.
\]
A vector field $V_t$ whose flow  satisfies conditions \eqref{eq:flow}, \eqref{eq:initialcond} is sought in the form 
\begin{equation}\label{eq:Vt}
	V_t=h_t \Theta_t + Y_t,  \quad  t\in [0,1] 
\end{equation}
where $h_t$ is a smooth family of holomorphic functions and $Y_t$ is a smooth family of 
holomorphic vector fields tangent to $\ker\alpha_t$ on a neighborhood of $R \times \{0\}^{2n}$. Then, 
\[
	V_t \,\rfloor\, \alpha_t = h_t\quad \text{and}\quad V_t\,\rfloor\, d\alpha_t = Y_t \,\rfloor\, d\alpha_t.
\]
Differentiating the equation \eqref{eq:flow} on $t$ gives
\[
	\phi^*_t\bigl(\dot\alpha_t + L_{V_t}\alpha_t \bigr)=0,
\]
where $L$ denotes the Lie derivative. By using Cartan's formula
$L_V\alpha=d(V\rfloor \alpha) + V\rfloor\, d\alpha$ we see that $V_t$ must satisfy the equation
\begin{equation}\label{eq:htYt}
	0= \dot{\alpha_t} + d(V_t\rfloor \alpha_t) + V_t\,\rfloor\, d\alpha_t = 
	\beta-\alpha + dh_t + Y_t \,\rfloor\, d\alpha_t, \quad t\in [0,1].
\end{equation}
Contracting this $1$-form with the Reeb vector field $\Theta_t$ and noting that $\Theta_t\rfloor\, d\alpha_t=0$ gives
\begin{equation}\label{eq:PDE}
	 \Theta_t\rfloor\, dh_t= \Theta_t(h_t)  = \Theta_t\rfloor (\alpha-\beta), \quad t\in[0,1].
\end{equation}
%
%
This is a $1$-parameter family of linear holomorphic partial differential equations for 
the functions $h_t$. Note that the contact plane field $\ker\alpha_t$ is tangent to the hypersurface 
$\Sigma=\{z=0\}$ along $R\times \{0\}^{2n}$. Since $\Theta_t \,\rfloor\, \alpha_t =1$, $\Sigma$
is noncharacteristic for the Reeb vector field $\Theta_t$ along $R\times\{0\}^{2n}$
for every $t\in[0,1]$. It follows that the equation \eqref{eq:PDE} has a unique local solution $h_t$ satisfying the initial condition
$h_t|_{\Sigma}=0$ for all $t\in[0,1]$. Since the right hand side of  \eqref{eq:PDE} 
vanishes on $R\times \{0\}^{2n}$, the solutions $h_t$ satisfy
\begin{equation}\label{eq:htvanishes}
	h_t(x,0,\ldots,0)=0\ \ \text{and}\ \ dh_t(x,0,\ldots,0)=0\ \ \text{for all $t\in[0,1]$ and $x\in R$.}
\end{equation}
This choice of $h_t$ ensures that the $\Theta_t$-component of the $1$-form $\beta-\alpha + dh_t$ 
vanishes. Since the $2$-form $d\alpha_t$ is nondegenerate on $\ker\alpha_t$, the equation
\eqref{eq:htYt} has a unique holomorphic solution $Y_t$ tangent to $\ker\alpha_t$. 
In view of \eqref{eq:htvanishes} we have
\[
	\beta-\alpha + dh_t=0\quad \text{on}\ \ R \times \{0\}^{2n}.
\] 
Thus, we see from \eqref{eq:htYt} that the vector field $Y_t$ vanishes along $R \times \{0\}^{2n}$, 
and hence so does $V_t$ \eqref{eq:Vt} in view of \eqref{eq:htvanishes}.
It follows that the flow $\phi_t$ of $V_t$ exists for all  $t\in[0,1]$ in some neighborhood 
of $R \times\{0\}^{2n}$ and it satisfies conditions \eqref{eq:flow} and \eqref{eq:initialcond}. 
This reduces $\beta$ to the normal form $\alpha$ \eqref{eq:normal} on a neighborhood of $R\times \{0\}^{2n}$.
\end{proof}


\section{Local analysis near a Legendrian curve} 
\label{sec:local}  


Let $R$ be an open Riemann surface and $\theta$ be a nowhere vanishing holomorphic $1$-form on $R$.
We consider holomorphic Legendrian curves in the manifold $R\times \C^{2n}$
endowed with the contact form \eqref{eq:normal}:
\begin{equation}\label{eq:alpha}
	\alpha= dz - y_1\theta(x_1) - \sum_{i=2}^n y_i dx_i.
\end{equation}
Here, $x_1$ denotes the variable in $R$ and the other coordinates are  Euclidean coordinates on $\C^{2n}$.
By Theorem \ref{th:normal}, this corresponds to the local analysis near an immersed Legendrian
curve $R\to X$ in a complex contact manifold $(X,\xi)$. 
When applying these results in conjuction with Theorem \ref{th:normal}, we consider only
Legendrian curves in an open neighborhood $\Omega \subset R\times \C^{2n}$ of $R\times\{0\}^{2n}$ 
which corresponds to a Darboux patch in $(X,\xi)$. The following lemma 
is seen by an obvious calculation; see \cite[Lemma 3.1]{AlarconForstnericLopez2017CM}
for the case $R=\C$.

%
%
\begin{lemma}\label{lem:approximate}
Let $\alpha$ be the contact form \eqref{eq:alpha} on $R\times \C^{2n}$.
Given a holomorphic map $f=(x,y,z) \colon \D\to R\times \C^{2n}$, the map
$\tilde f=(x,y,\tilde z) \colon \D\to R\times \C^{2n}$ with 
\begin{equation}\label{eq:tildeF}
	\tilde z(\zeta) = z(\zeta)-\int_0^\zeta f^*\alpha,\quad \zeta\in \D
\end{equation}
is a holomorphic $\alpha$-Legendrian disc satisfying 
\[
	||\tilde z -z||_{0,\D}\le \sup_{|\zeta|<1} \left| \int_0^\zeta f^*\alpha\right|. 
\]
The same is true if $\D$ is replaced by a bordered Riemann surface $M$ 
provided that the holomorphic $1$-form $f^*\alpha$ has vanishing periods over all closed curves in $M$.
\end{lemma}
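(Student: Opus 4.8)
The plan is to verify directly that the modified map $\tilde f$ is Legendrian, exploiting the fact that $\tilde f$ differs from $f$ only in its last coordinate. First I would write out the pullback $f^*\alpha$ as a holomorphic $1$-form on $\D$: since $f=(x,y,z)$ with $x=(x_1,\ldots,x_n)$, $y=(y_1,\ldots,y_n)$, and $x_1$ taking values in $R$, we have
\[
	f^*\alpha = dz - y_1\, x_1^*\theta - \sum_{i=2}^n y_i\, dx_i,
\]
where $x_1^*\theta$ is the pullback of $\theta$ by $x_1$ and $z,y_i,x_i$ are now scalar holomorphic functions of $\zeta\in\D$. Because $\D$ is simply connected, this $1$-form admits a single-valued holomorphic primitive, namely $\zeta\mapsto\int_0^\zeta f^*\alpha$, so $\tilde z$ in \eqref{eq:tildeF} is a well-defined holomorphic function on $\D$ and $d\tilde z = dz - f^*\alpha$ by the fundamental theorem of calculus.

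The key observation is that $\tilde f=(x,y,\tilde z)$ shares the components $x,y$ with $f$, while $\alpha$ involves $z$ only through $dz$; hence $\tilde f^*\alpha - f^*\alpha = d\tilde z - dz = d(\tilde z - z)$. Combining this with the definition of $\tilde z$, which gives $d(\tilde z - z) = -f^*\alpha$, yields $\tilde f^*\alpha = f^*\alpha - f^*\alpha = 0$, so $\tilde f$ is $\alpha$-Legendrian. The norm estimate is then immediate: by \eqref{eq:tildeF} we have $\tilde z(\zeta) - z(\zeta) = -\int_0^\zeta f^*\alpha$ for every $\zeta\in\D$, whence $\|\tilde z - z\|_{0,\D} \le \sup_{|\zeta|<1}\bigl|\int_0^\zeta f^*\alpha\bigr|$.

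The only point requiring care is the passage from $\D$ to a compact bordered Riemann surface $M$, and the hard part, such as it is, is the well-definedness of the primitive: $M$ need not be simply connected, so a priori $\int_{\zeta_0}^\zeta f^*\alpha$ depends on the chosen path. Under the stated hypothesis that $f^*\alpha$ has vanishing periods $\oint_\gamma f^*\alpha = 0$ over all closed curves $\gamma$ in $M$ (equivalently, over a basis of $H_1(M;\Z)$), the primitive becomes single-valued and holomorphic on $M$ once a base point is fixed. With $\tilde z$ defined by the same formula, the computation above applies verbatim and shows that $\tilde f$ is a holomorphic $\alpha$-Legendrian curve satisfying the same estimate.
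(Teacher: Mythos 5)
Your verification is correct and is precisely the ``obvious calculation'' the paper alludes to (citing \cite[Lemma 3.1]{AlarconForstnericLopez2017CM} for the case $R=\C$): since $\alpha$ involves $z$ only through $dz$, one has $\tilde f^*\alpha - f^*\alpha = d(\tilde z - z) = -f^*\alpha$, and the period hypothesis is exactly what makes the primitive single-valued on a non-simply-connected $M$. Nothing is missing.
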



\begin{proof}[Proof of Theorem \ref{th:embedded}] 
Let $f\colon R\to X$ be a holomorphic Legendrian immersion from an open Riemann surface $R$
to a complex contact manifold $(X,\xi)$ of dimension $2n+1\ge 3$. It suffices to show that for every compact smoothly 
bounded domain $M\subset R$ the restriction $f|_M\colon M\to X$ can be approximated 
in the $\Cscr^1(M)$-norm by a Legendrian embedding $\tilde f\colon M\to X$ of class 
$\Ascr^1(M)=\Cscr^1(M)\cap \Oscr(\mathring M)$.
Fix such a domain $M$. After shrinking $R$ around $M$, Theorem \ref{th:normal}
provides a holomorphic immersion $F\colon Y=R\times r\B^{2n}\to X$, where $\B^{2n}$ is the unit ball
in $\C^{2n}$ and $r>0$, such that $F|_{R\times \{0\}^{2n}}=f$ and the contact structure $F^*\xi$ 
is given by the $1$-form $\alpha$ \eqref{eq:alpha}. Note that the non-isotropic dilation
\begin{equation}\label{eq:dilation}
	z\mapsto t^2 z, \ \ y_1\to t^2 y_1;\quad x_j\mapsto tx_j,\ \ y_j\mapsto t y_j \ \ \text{for}\ j=2,\ldots,n
\end{equation}
for $t\in\C^*$ preserves the contact structure \eqref{eq:alpha}, so we may assume that $r=1$.

For simplicity of notation, we present the proof in the case $n=1$, 
so $\dim X=3$, $Y=R\times\B^2$ and 
\[
	\alpha= dz - y\,\theta(x), \quad x\in M,\ (y,z)\in \B^2.
\]
Let $\Sigma$ be the complex subvariety of $Y\times Y$ defined by
\begin{equation}\label{eq:Sigmaprime}
	\Sigma=\bigl\{\bigl((x,y,z),(x',y',z')\bigr)\in Y\times Y : F(x,y,z)=F(x',y',z')\bigr\}
	= \Delta_Y \cup\Sigma',
\end{equation}
where $\Delta_Y$ denotes the diagonal of $Y\times Y$ and $\Sigma'$ is the union of the irreducible 
components of $\Sigma$ disjoint from $\Delta_Y$.
Since $F$ is an immersion, we have $\dim \Sigma'=3$ or else $\Sigma'=\varnothing$; in the latter case,
$F$ (and hence $f$) is an embedding and there is nothing to prove.

Let $M\ni x\mapsto g_0(x)=(x,0,0)\in Y$ denote the inclusion map $M\hra M\times \{0\}^2 \subset Y$. 
Since $f$ is an immersion, there is an open neighborhood $U\subset M\times M$ of the diagonal 
$\Delta_M$ such that $\overline U\cap \Sigma'=\varnothing$. To prove the theorem, we must find 
arbitrarily close to $g_0$ in the $\Cscr^1(M)$-norm a holomorphic 
$\alpha$-Legendrian map $g\colon M\to Y$ such that the associated map
\[
	g^2\colon M\times M\to Y\times Y,\quad g^2(x,x')=(g(x),g(x'))\ \ (x,x' \in M)
\]
satisfies the condition
\begin{equation}\label{eq:avoiding}
	g(M\times M\setminus U)\cap \Sigma'=\varnothing.
\end{equation}
Assuming that $g$ is close enough to $g_0$, this condition ensures that 
$\tilde f = F\circ g\colon M\to X$ is a holomorphic Legendrian embedding which approximates 
the initial Legendrian immersion $f\colon M\to X$. Indeed, \eqref{eq:avoiding} implies that 
$\tilde f(x)\ne \tilde f(x')$ for all $(x,x')\in M\times M\setminus U$, while
for $(x,x')\in \overline U\setminus \Delta_M$ the same holds provided that $g$ is close to $g_0$
in the $\Cscr^1(M)$-norm.

To find Legendrian maps $g$ satisfying \eqref{eq:avoiding}, we apply the transversality method
together with the technique of controlling the periods. The argument 
is similar to the one in \cite[Lemma 4.4]{AlarconForstnericLopez2017CM} in the case when $R=\C$, $Y=\C^3$
and $\alpha=dz - y\, dx$. It suffices to construct a holomorphic map $H\colon M \times r_0\B^N \to Y$,
where $\B^N$ is the unit ball in $\C^N$ for some big  integer $N\in\N$ and $r_0>0$,  
satisfying the following conditions: 
\begin{itemize}
\item[\rm (a)] $H(\cdotp,0)=g_0$ is the inclusion map $M\hra M\times \{0\}^2 \subset Y$,
\vspace{1mm}
\item[\rm (b)] $H(\cdotp,\xi)\colon M\to Y$ is a holomorphic Legendrian immersion for every $\xi \in r_0\B^N$, and
\vspace{1mm}
\item[\rm (c)]  the map $H^2 \colon M\times M \times  r_0\B^N \to Y\times Y$, defined by 
\[
	H^2(x,x',\xi) = \bigl(H(x,\xi),H(x',\xi)\bigr), \quad x,\, x' \in M, \  \xi \in r_0\B^N,
\]
is a {\em submersive family  of maps} on $M\times M\setminus U$, in the sense that  
\[
	\di_\xi|_{\xi=0} \, H^2(x,x',\xi)  \colon \C^N \to T_x Y \oplus T_{x'}Y \ \ 
	\text{is surjective for every $(x,x')\in M\times M\setminus U$}. 
\]
\end{itemize}

Assume for a moment that such $H$ exists. By compactness of $M\times M \setminus U$, 
it follows from (c) that the partial differential $\di_\xi H^2$ is surjective 
on $(M\times M\setminus U) \times r\B^N$ for some $0<r \le r_0$. For such $r$, the map 
$H^2 \colon (M\times M\setminus U)\times r\B^N \to Y\times Y$ 
is transverse to any complex subvariety of $Y\times Y$, in particular, to $\Sigma'$ (see  \eqref{eq:Sigmaprime}).
It follows that for a generic choice of $\xi\in r\B^N$  the map 
$H^2(\cdotp,\cdotp,\xi)\colon M\times M\to Y\times Y$ is transverse to $\Sigma'$ on 
$M\times M\setminus U$, and hence it does not intersect $\Sigma'$ by dimension reasons. 
(See Abraham \cite{Abraham1963TAMS} or \cite[Section  7.8]{Forstneric2011} 
for the details of this argument.) Choosing $\xi$ sufficiently close to $0\in\C^N$ gives a holomorphic 
$\alpha$-Legendrian embedding $g = H(\cdotp,\xi)\colon M \to Y$ close to $g_0$ 
in the $\Cscr^1(M)$-norm, so $F\circ g\colon M\to X$ is a $\xi$-Legendrian embedding
as explained above.

It remains to explain the construction of $H$.
It suffices to find for any given pair of points $(p,q)\in M\times M\setminus U$ 
a holomorphic spray $H$ as above, with $N=3$, such that 
\begin{equation}\label{eq:specialH}
	H(p,\xi)=(p,0,0) \ \ \text{for all $\xi$}, \ \ \text{and}\ \ 
	\di_\xi|_{\xi=0} \, H(q,\xi)  \colon \C^3 \to T_{q} Y\ \ \text{is an isomorphism}.
\end{equation}
Since submersivity of the differential is an open condition and $M\times M\setminus U$ is 
compact, we then obtain a spray $H$ satisfying conditions (a)--(c) by composing finitely 
many sprays satisfying \eqref{eq:specialH}  (cf.\  \cite[proof of Theorem 2.4]{AlarconForstneric2014IM}).

Let $V$ be a nowhere vanishing holomorphic vector field on $R$,
and let $\phi_t(x)$ denote the flow of $V$ with the initial
condition $\phi_0(x)=x$. Every $h\in \Ascr^1(M)$ sufficiently close to the zero function
determines a holomorphic map $\phi[h]\colon M\to R$ defined by
\[
	\phi[h](x) = \phi_{h(x)}(x),\quad x\in M.
\]
Note that $\phi[0]$ is the identity map, and the Taylor expansion 
in any local coordinate on $R$ is 
\[
	\phi[h](x) = x+ h(x) V(x) + O(|h(x)|^2).
\]

Fix a pair of distinct points $p\ne q$  in $M$. Choose a smooth embedded arc $E \subset M$ 
connecting $p$ to $q$. Let $C_1,\ldots,C_\ell\subset \mathring M$ be closed curves forming 
a basis of the homology group $H_1(M;\Z)=\Z^\ell$ such that 
$\big(\bigcup_{k=1}^\ell C_k\big) \cap E = \varnothing$ and the compact set 
$\big(\bigcup_{k=1}^\ell C_k\big) \cup E$ is Runge in $M$. Let 
$\Pcal=(\Pcal_1,\ldots,\Pcal_\ell)$ be the period map with the components 
\begin{equation}\label{eq:period}
    \Pcal_j(h_1,h_2)=\int_{C_j} h_2\, \phi[h_1]^* \theta,\quad\  j=1,\ldots,\ell,
\end{equation}
defined for all $h_1\in \Ascr^1(M)$ close enough to $0$ and all $h_2\in \Ascr^1(M)$.
Here, $\phi[h_1]^* \theta$ denotes the pull-back of the $1$-form $\theta$ by the map $\phi[h_1]\colon M\to R$.

Let $\mu > 0$ be a small number whose value will be determined  later. 
Choose holomorphic functions $g_1,\ldots, g_\ell, h_1,h_2 \in \Oscr(M)$ satisfying 
the following conditions:
\begin{itemize}
\item[\rm (i)] $\int_{C_j} g_k \, \theta = \delta_{j,k}\ \ \text{for all}\ \ j,k=1,\ldots,\ell$
(here, $\delta_{j,k}$ is the Kronecker symbol);
\vspace{1mm}
\item[\rm (ii)] $g_k(p)=0$ and $|g_k(x)| < 1$ for all $x \in E$ and $k=1,\ldots,\ell$;
\vspace{1mm}
\item[\rm (iii)] $h_1(p)=0$, $h_1(q)=1$, $h_2(p)=h_2(q)=0$;
\vspace{1mm}
\item[\rm (iv)]  $\int_E h_2\, \theta =-1$;
\vspace{1mm}
\item[\rm (v)]  $|h_j(x)| < \mu$ for all $x\in \bigcup_{k=1}^\ell C_k$ and $j=1,2$.
\end{itemize}
Functions with these properties are easily found by first constructing suitable smooth 
functions on the curves $\big(\bigcup_{k=1}^\ell C_k\big) \cup E$ and  applying 
Mergelyan's approximation theorem.
Let $\xi=(\xi_1,\xi_2,\xi_3)\in \C^3$ and $\zeta=(\zeta_1,\ldots,\zeta_\ell)\in \C^\ell$. Consider the function
\begin{equation}
\label{eq:spray-y}
	\tilde y(x,\xi_2,\xi_3,\zeta) = \xi_2 h_2(x) + \xi_3 h_1(x) +  \sum_{k=1}^\ell \zeta_k\, g_k(x), \quad x\in M.
\end{equation}
Note that $\tilde y(x,0,0,0)=0$ for all $x\in M$ and $\tilde y(p,\xi_2,\xi_3,\zeta)=0$ for all $(\xi_2,\xi_3,\zeta)$. 
Condition (i) implies that
\[
	\Pcal_j\bigl(0,\tilde y(\cdotp,0,0,\zeta)\bigr) = \int_{C_j} \sum_{k=1}^\ell \zeta_k\, g_k \, \theta =\zeta_j
\]
for $j=1,\ldots,\ell$, and hence
\begin{equation}\label{eq:der-id}
	\frac{\di}{\di\zeta}\bigg|_{\zeta=0} \Pcal\bigl(0,\tilde y(\cdotp,0,0,\zeta)\bigr) \colon \C^\ell \lra \C^\ell
	\quad \text{is the identity map}.
\end{equation}
The implicit function theorem then shows that the period vanishing equation 
\begin{equation}\label{eq:periodvanishing}
	\Pcal\bigl(\xi_1 h_1, \tilde y(\cdotp,\xi_2,\xi_3,\zeta)\bigr) = 
	\left( \int_{C_j} \tilde y(\cdotp,\xi_2,\xi_3,\zeta)\, \phi[\xi_1 h_1]^* \theta \right)_{j=1,\ldots,\ell}  =0 	
\end{equation}
(which trivially holds at $\xi=0$ and $\zeta=0$)  
can be solved on $\zeta=\zeta(\xi)$ in a neighborhood of $0\in \C^3$, with $\zeta(0)=0$. 
Differentiation of \eqref{eq:periodvanishing} on $\xi$ at $\xi=0$ 
and \eqref{eq:der-id} give
\[
	\zeta'(0) = - \frac{\di}{\di\xi}\bigg|_{\xi=0} \Pcal\bigl(\xi_1 h_1, \tilde y(\cdotp,\xi_2,\xi_3,0) \bigr).
\]
We claim that
\begin{equation}\label{eq:estimatedrho}
	|\zeta'(0)| = O(\mu).
\end{equation}
To see this, note that
\[
	\Pcal_j\bigl(\xi_1 h_1, \tilde y(\cdotp,\xi_2,\xi_3,0) \bigr) = 
	\int_{C_j} (\xi_2 h_2 + \xi_3 h_1) \phi[\xi_1 h_1]^* \theta. 
\]
The partial derivatives on $\xi_2$ and $\xi_3$ equal
$\int_{C_j} h_i \theta$ $(i=1,2)$ which are of size  $O(\mu)$ by condition (v),
while the partial derivative on $\xi_1$ vanishes at $\xi=0$; this proves \eqref{eq:estimatedrho}.

Define the holomorphic spray $\tilde z(\cdotp,\xi)$ with the core $\tilde z(\cdotp,0)=0$ on $M$ by
\begin{equation}\label{eq:tildez}
	\tilde z(x,\xi)= - \int_p^{x} \tilde y(\cdotp,\xi_2,\xi_3,\zeta(\xi))\,  \phi[\xi_1 h_1]^* \theta,\quad x\in M.
\end{equation}
By the choice of $\zeta(\xi)$ the integral is independent of the choice of the path. It follows that
the holomorphic map  $H(\cdotp,\xi)\colon M\to Y$ defined by
\[
	H(x,\xi) = \bigl( \phi[\xi_1 h_1](x), \tilde y(x,\xi_2,\xi_3,\zeta(\xi)), \tilde z(x,\xi) \bigr), \quad x\in M 
\] 
is $\alpha$-Legendrian for every $\xi\in r\B^3$ and is also holomorphic with respect to $\xi$.
Obviously, $H$ satisfies the first condition in \eqref{eq:specialH}. We have
\begin{eqnarray}
	\frac{\di}{\di \xi}\bigg|_{\xi=0} \phi[\xi_1 h_1](q) &=& (V(q),0,0)    \label{eq:xxi} \\
	\frac{\di}{\di \xi}\bigg|_{\xi=0}   \tilde y(q,\xi_2,\xi_3,\zeta(\xi))  &=& (0,0,1)+ O(\mu),   \label{eq:yxi} \\
	\frac{\di}{\di \xi_2}\bigg|_{\xi=0}\, \tilde z(q,\xi)    &=& -\int_E   h_2\theta + O(\mu) 
	= 1+ O(\mu).   \label{eq:zxi}
\end{eqnarray}
The equation \eqref{eq:xxi} is immediate from the definition of the flow and the condition $h_1(q)=1$
(see (iii)), \eqref{eq:yxi} is obvious from the definition of $\tilde y$ and conditions
$h_1(q)=1$, $h_2(q)=0$ (see (iii)), and in \eqref{eq:zxi} we used condition (iv) on $h_2$.
The error terms $O(\mu)$ in \eqref{eq:yxi} and \eqref{eq:zxi} come from the contribution by 
$\zeta'(0)$ (see \eqref{eq:estimatedrho}). By choosing the constant $\mu>0$  small enough, 
we see from \eqref{eq:xxi}, \eqref{eq:yxi} and \eqref{eq:zxi} that $H$ also satisfies the second condition in 
\eqref{eq:specialH}. This completes the proof of Theorem \ref{th:embedded}.
\end{proof}

\begin{remark}[Deformation theory of Legendrian curves] 
\label{remark:deformation}
Let $f_0\colon R\to X$ be an immersed holomorphic Legendrian curve, and let
$M$ be a compact smoothly bounded domain in $R$. The proof of Theorem \ref{th:embedded} 
then shows that the space of all Legendrian immersions $f\colon M\to X$ of class $\Ascr^1(M)$ which are
close to $f_0|_{M}$ is a local complex Banach manifold.
Indeed, in a Darboux chart around $f_0$ provided by Theorem \ref{th:normal},
we consider the space of small perturbations of class $\Ascr^1(M)$ of all 
components of $f_0$ except the $z$-component; clearly this is an open set in a
complex Banach space. We have seen in the proof of Theorem \ref{th:embedded}
that the period vanishing condition for the $1$-form $\sum_{j=1}^n x_j dy_j$
is of maximal rank, and hence it defines a local complex Banach submanifold. 
In view of Lemma \ref{lem:approximate}, this submanifold parametrizes the space of all
small Legendrian perturbations of $f_0|_M$. For the details in a related case
of directed holomorphic immersions, we refer to \cite[Theorem 2.3]{AlarconForstneric2014IM}.
\end{remark}


\section{Proof of Theorem \ref{th:main}} 
\label{sec:proof-main}

Theorem \ref{th:main} follows by an inductive application of the following lemma.

\begin{lemma}
\label{lem:elongate}
Let $(X,\xi)$ be a complex contact manifold, and let $\dist_X$ be a distance
function on $X$ induced by a Riemannian metric. Given a compact bordered
Riemann surface $M$, a point $u_0\in \mathring M$,
a holomorphic Legendrian immersion $f_0\colon M\to X$,
and a number $\epsilon>0$, there exists a holomorphic Legendrian 
embedding $f\colon M\hra X$ such that 
\begin{equation}\label{eq:est-f1}
	\sup_{u\in M}\dist_X(f(u),f_0(u))<\epsilon
\end{equation}	
and
\begin{equation}\label{eq:est-f2}
	\inf\bigl\{\length_X(f(\gamma)) : 
	\text{$\gamma\subset M$ is a path connecting $u_0$ and $bM$}\bigr\}> \frac1{\epsilon}.
\end{equation}
\end{lemma}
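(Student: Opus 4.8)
The plan is to prove Lemma~\ref{lem:elongate} by a recursive ``label-and-elongate'' procedure built on top of Theorem~\ref{th:embedded} and Lemma~\ref{lem:approximate}, using the Darboux charts of Theorem~\ref{th:normal} to localize the problem. The idea is standard in the Calabi--Yau circle of ideas: one cannot in general increase the intrinsic distance to the boundary in one stroke while keeping the image uniformly close to $f_0$, so instead I would split the collar near $bM$ into finitely many ``labels'' and push each one out a little. Concretely, I would fix a smooth exhaustion or a finite cover of the boundary and choose a small number $\delta>0$ so that the tube $\{\dist_X(\cdot,f_0(M))<\delta\}$ is covered by finitely many Darboux charts of the form $R\times r\B^{2n}$ with contact form $\alpha$ as in \eqref{eq:alpha}; inside each such chart the problem becomes one about Legendrian discs with prescribed period behaviour, which is exactly what Lemma~\ref{lem:approximate} controls.

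The core inductive step is a single elongation lemma: given a Legendrian immersion $g$ that is $\Cscr^1$-close to $f_0$ on $M$, and given a finite family of short arcs $\{\gamma_i\}$ from a fixed interior point $u_0$ toward $bM$ along which the current $f$-length is still small, I would produce a new Legendrian immersion $g'$, still uniformly close to $g$, whose length along every $\gamma_i$ has increased by at least a fixed positive amount, while the $\Cscr^0$-distance from $g$ is controlled by the chart radius $r$. The mechanism is to modify the ``free'' components of $g$ (everything except the $z$-coordinate) by a large but compactly-supported holomorphic perturbation concentrated near the relevant piece of the collar, and then to restore the Legendrian condition by redefining the $z$-coordinate via the integral in \eqref{eq:tildeF} of Lemma~\ref{lem:approximate}. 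The key point, exactly as in the period-killing argument inside the proof of Theorem~\ref{th:embedded}, is that one can prescribe large $y$-values (hence a large contribution $\int f^*\!\sum y_i\,dx_i$ to arc length) while keeping the overall $\Cscr^0$ displacement as small as desired, because the Legendrian correction to $z$ is bounded by $\sup|\int_0^\zeta f^*\alpha|$ and we arrange the perturbation to have controlled periods over the homology basis $C_1,\dots,C_\ell$ so that this correction is globally well defined.

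I would iterate this elongation countably many times, with geometrically decreasing $\Cscr^0$-error budgets $\epsilon_k$ summing to less than $\epsilon/2$, so that the sequence converges uniformly on $M$ to a map $f$ satisfying the proximity estimate \eqref{eq:est-f1}; simultaneously the lengths along paths to $bM$ are driven to infinity, yielding \eqref{eq:est-f2}. At the very end I would invoke Theorem~\ref{th:embedded} to perturb the final immersion into an \emph{embedding} while preserving both estimates, since embeddedness is an open-and-dense general-position condition that costs an arbitrarily small $\Cscr^1$-perturbation and hence does not spoil the length lower bound. The main obstacle I anticipate is \emph{bookkeeping of the periods across chart boundaries}: the elongation must be performed chart by chart, but the Legendrian primitive $z$ is a global object on $M$, so I must ensure that the perturbations in overlapping Darboux patches are matched (via a partition of unity or a sequential, one-chart-at-a-time scheme) and that the period map over $\{C_j\}$ stays at maximal rank throughout, exactly the implicit-function-theorem argument around \eqref{eq:periodvanishing}. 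Controlling the geometry of the paths $\gamma$ uniformly over all of $bM$ while keeping each elongation step small is the delicate quantitative heart of the construction.
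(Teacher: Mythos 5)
Your proposal and the paper diverge in an important way: the paper does \emph{not} redo the elongation construction inside the contact manifold at all. It uses Theorem \ref{th:normal} to produce a \emph{single} global Darboux chart $F\colon Y=R\times\B^2\to X$ around the entire curve $f_0$, observes that $\alpha=dz-y\,dx_0=G^*\alpha_0$ where $G(u,y,z)=(x_0(u),y,z)$ and $x_0\colon R\to\C$ is a Gunning--Narasimhan immersion, and then simply \emph{imports} the already-proved Euclidean case of the lemma (\cite[Lemma 6.5]{AlarconForstnericLopez2017CM} for $(\C^3,\alpha_0)$) applied to the $\alpha_0$-Legendrian immersion $g_0=(x_0,0,0)$. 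The only new ingredient is a small factorization lemma (Lemma \ref{lem:observation}): any holomorphic $x_1$ sufficiently close to $x_0$ on $M$ factors as $x_1=x_0\circ\phi$ with $\phi\colon M\to R$ close to the identity, so the elongated Legendrian curve $g_1=(x_1,y_1,z_1)$ in $\C^3$ pulls back to the Legendrian curve $h=(\phi,y_1,z_1)$ in $Y$, and $f=F\circ h$ does the job. Your plan instead rebuilds the whole Calabi--Yau machinery from scratch, which is much more work and is exactly what the reduction is designed to avoid.

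Beyond being the harder route, your sketch has a genuine flaw at its center: you propose to cover a tube around $f_0(M)$ by \emph{finitely many} Darboux charts and to match the perturbations on overlaps ``via a partition of unity or a sequential scheme.'' A partition of unity destroys holomorphicity, so that patching mechanism is not available; and the sequential one-chart-at-a-time alternative is precisely the delicate period bookkeeping you flag as the ``quantitative heart'' and then leave unproved. In fact the entire problem you are trying to solve is an artifact of using several charts: Theorem \ref{th:normal} already gives one chart containing all of $M$, so there are no chart boundaries to cross and no matching to do. There is also a structural confusion: the countable iteration with summable error budgets $\epsilon_k$ belongs to the deduction of Theorem \ref{th:main} from Lemma \ref{lem:elongate}, not to the proof of the lemma itself, which only requires a finite length increase. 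Finally, your closing step (perturb to an embedding by Theorem \ref{th:embedded} at the end) is fine and matches the paper, but without either carrying out the elongation estimate in the single chart $(R\times\B^2,\,dz-y\,\theta)$ or invoking the known $(\C^3,\alpha_0)$ case plus the factorization $x_1=x_0\circ\phi$, the proof does not close.
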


By using this lemma, it is a trivial matter to construct a sequence of 
holomorphic Legendrian embeddings $f_j\colon M \hra X$ converging uniformly on $M$
to a topological embedding $f=\lim_{j\to\infty} f_j\colon M\hra X$ which is as close as desired
to $f_0$ uniformly on $M$ and whose restriction to $\mathring M$ is a complete holomorphic 
Legendrian embedding. We refer to \cite[proof of Theorem 1.1]{AlarconDrinovecForstnericLopez2015PLMS} 
for a detailed explanation in a similar geometric context.

In the standard case when $X=\C^{2n+1}$ is endowed with the standard contact structure, 
Lemma \ref{lem:elongate} coincides with \cite[Lemma 6.5]{AlarconForstnericLopez2017CM}.
Although the latter lemma is stated only for the case when $M$ is the disc, it is explained there
how the proof extends to any compact bordered Riemann surface.
In the case at hand, we shall use \cite[Lemma 6.5]{AlarconForstnericLopez2017CM} 
together with the existence of Darboux charts furnished by Theorem \ref{th:normal}.

For simplicity of notation we assume that $\dim X=3$; the same proof applies in general.
We also assume without loss of generality that $M$ is a compact smoothly bounded domain in an open 
Riemann surface, $R$, and that $f_0$ extends to a holomorphic Legendrian immersion $R\to (X,\xi)$. 
By Theorem \ref{th:embedded} we may further assume, up to shrinking $R$ around $M$ if necessary, 
that $f_0\colon R\hra (X,\xi)$ is a holomorphic Legendrian embedding.  

Choose a holomorphic immersion $x_0\colon R\to \C$ (see \cite{GunningNarasimhan1967MA}).
After shrinking $R$ around $M$ if necessary, Theorem \ref{th:normal}
provides a holomorphic embedding $F\colon Y=R\times \B^2 \hra X$
such that the contact structure $F^*\xi$ on $Y$ is determined by the $1$-form
$\alpha=dz - y dx_0$. More precisely, letting $\alpha_0=dz - ydx$ denote the standard 
contact form on $\C^3$ and $G\colon Y \to \C^3$ the immersion 
$G(u,y,z)=(x_0(u),y,z)$, we have $\alpha=G^*\alpha_0$. 
Note that $g_0=G(\cdotp,0,0)=(x_0,0,0)\colon R\to \C^3$ is an $\alpha_0$-Legendrian immersion.

Fix a Riemannian metric on $R$ with the distance function $\dist_R$.
By using the Euclidean metric on $\C^2$ we thus get a metric and 
a distance function on $Y=R\times \B^2$.

Let $r_0>0$ be the radius of injectivity of the immersion $x_0\colon R\to\C$ on $M$. 
This means that for every point $u\in M$, $x_0$ maps the geodesic disc $D(u,r_0)\subset R$ around $u$ 
bijectively onto its image $x_0(D(u,r_0)) \subset \C$. Let $r_0'>0$ be chosen such that the Euclidean
disc $\D(x_0(u),r_0') \subset \C$ lies inside $x_0(D(u,r_0))$ for every $u\in M$; recall that $M$ is compact. 

\begin{lemma}\label{lem:observation}
There is a constant $C\ge 1$ such that for every function 
$x\colon M\to \C$ which is uniformly $r_0'$-close to $x_0\colon M\to\C$ on $M$
there exists a unique map $\phi \colon M\to R$ which is $r_0$-close to the identity 
on $M$ such that $x=x_0 \circ \phi$ and 
\begin{equation}
\label{eq:observation}
	C^{-1} \|x-x_0\|_M   \le \dist_M(\phi,\Id_M) \le C \|x-x_0\|_M. 
\end{equation}
If $x$ is holomorphic on $M$ then so is $\phi$.
\end{lemma}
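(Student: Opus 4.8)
This is an inverse-function-theorem type statement.The plan is to construct $\phi$ by inverting $x_0$ locally on each geodesic disc, and then to extract the two-sided estimate from a uniform bi-Lipschitz comparison valid on the compact set $M$.

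First I would fix $u\in M$ and use the hypothesis $\|x-x_0\|_M<r_0'$ together with the choice of $r_0'$: since $x(u)\in\D(x_0(u),r_0')\subset x_0(D(u,r_0))$ and $x_0$ maps the geodesic disc $D(u,r_0)$ bijectively onto its image, there is a unique point $\phi(u)\in D(u,r_0)$ with $x_0(\phi(u))=x(u)$. This defines a map $\phi\colon M\to R$ with $x=x_0\circ\phi$, and by construction $\dist_R(\phi(u),u)<r_0$, so $\phi$ is $r_0$-close to $\Id_M$. Uniqueness within the class of $r_0$-close maps is immediate: if $\psi$ is another such map with $x_0\circ\psi=x$, then $\psi(u),\phi(u)\in D(u,r_0)$ have the same image under the injective map $x_0|_{D(u,r_0)}$, hence coincide. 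For the holomorphic statement, note that near any $u_0\in M$ one has $\phi=\bigl(x_0|_{D(u_0,r_0)}\bigr)^{-1}\circ x$; if $x$ is holomorphic then $\phi$ is a composition of holomorphic maps (the local inverse of a holomorphic immersion is holomorphic), and these local representations agree by uniqueness, so $\phi$ is holomorphic on all of $M$.

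It remains to prove the estimate \eqref{eq:observation}. The key point is that, since $x_0$ is a holomorphic immersion and $M$ is compact, the pulled-back Euclidean metric $x_0^*|dw|^2$ is comparable to the fixed Riemannian metric on a neighborhood of $M$. Concretely, after shrinking $r_0$ if necessary, there is a constant $C\ge 1$ such that
\[
	C^{-1}\dist_R(a,b)\le |x_0(a)-x_0(b)|\le C\,\dist_R(a,b)
\]
whenever $a,b$ lie in this neighborhood with $\dist_R(a,b)<r_0$. Applying this with $a=u$ and $b=\phi(u)$ and recalling $x_0(\phi(u))=x(u)$ gives $C^{-1}\dist_R(u,\phi(u))\le |x(u)-x_0(u)|\le C\,\dist_R(u,\phi(u))$ for every $u\in M$. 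Taking the supremum over $u\in M$, with $\dist_M(\phi,\Id_M)=\sup_{u\in M}\dist_R(\phi(u),u)$, yields \eqref{eq:observation}.

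I expect the main obstacle to be the bi-Lipschitz comparison in the displayed inequality, and in particular the mismatch between the geodesic distance $\dist_R$ and the Euclidean chord distance $|x_0(a)-x_0(b)|$ in $\C$ (as opposed to the length of the $x_0$-image path). Reconciling these uniformly in $u$ is where compactness of $M$, the immersivity of $x_0$, and the restriction to the injectivity radius $r_0$ all enter; once this comparison is in place, both the construction and the estimate follow directly.
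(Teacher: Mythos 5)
Your proposal is correct and follows essentially the same route as the paper: the paper constructs $\phi(u)$ as the unique preimage of $x(u)$ under $x_0|_{D(u,r_0)}$ and then asserts that \eqref{eq:observation} ``follows from the inverse mapping theorem,'' which is precisely the uniform bi-Lipschitz comparison you spell out via compactness of $M$ and the immersivity of $x_0$. Your write-up is simply a more detailed version of the paper's sketch.
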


\begin{proof}
For every $u\in M$ we have $x(u) \in x_0(D(u,r_0))$, and hence there is a unique point 
$\phi(u)\in D(u,r_0)$ such that $x_0(\phi(u))=x(u)$. Clearly, this determines the map $\phi$
with the stated properties. The estimate \eqref{eq:observation}
follows from the inverse mapping theorem.
\end{proof}

Since $F\colon Y\to X$ is a biholomorphism onto the domain $F(Y)\subset X$, there is a number $\eta>0$ such that
for every immersion $h\colon M\to Y$ satisfying 
\begin{equation}\label{eq:est-h1}
	\dist_Y(h,\Id_M)<\eta
\end{equation}
and
\begin{equation}\label{eq:est-h2}
	\inf\bigl\{\length_Y(h(\gamma)) : 
	\text{$\gamma\subset M$ is a path connecting $u_0$ and $bM$}\bigr\} > \frac1{\eta},
\end{equation}
the immersion $f=F\circ h\colon M\to X$ satisfies the estimates \eqref{eq:est-f1} and  \eqref{eq:est-f2}.
Furthermore, if $h$ is a holomorphic $\alpha$-Legendrian embedding, then $f=F\circ h\colon M\to X$
is a holomorphic $\xi$-Legendrian embedding.
Hence, $f$ satisfies the conclusion of Lemma \ref{lem:elongate}.

It remains to find $h$. Pick a number $\delta>0$. By \cite[Lemma 6.5]{AlarconForstnericLopez2017CM}
there is a holomorhic $\alpha_0$-Legendrian immersion 
$g_1=(x_1,y_1,z_1)\colon M\to \C^3$ which is uniformly $\delta$-close to 
$g_0=(x_0,0,0)$ on $M$ and satisfies
\[
	\inf\bigl\{\length(g_1(\gamma)) : 
	\text{$\gamma\subset M$ is a path connecting $u_0$ and $bM$}\bigr\}>\frac1{\delta}.
\] 
In particular, the first component $x_1$ of $g_1$ is $\delta$-close to $x_0$ on $M$.
Choosing $\delta>0$ small enough, Lemma \ref{lem:observation} ensures that $x_1=x_0\circ \phi$ 
for some holomorphic map $\phi\colon M\to \R$; furthermore, the map 
$h=(\phi,y_1,z_1) \colon M\to Y=R\times \B^2$ is a holomorphic $\alpha$-Legendrian 
immersion satisfying \eqref{eq:est-h1} and \eqref{eq:est-h2}.
By Theorem \ref{th:embedded} we can choose $h$ to be an embedding.
This completes the proof.

%
%

\section{Contact neighborhoods of isotropic Stein submanifolds}\label{sec:CNT}

In this section, we consider isotropic complex submanifolds $M$ of higher dimension in a 
complex contact manifold $(X^{2n+1},\xi)$, and we prove a contact neighborhood theorem in the case when
$M$ is a Stein submanifold; see Theorem \ref{th:contactnbd}. In the special case when $M$ is Stein and 
contractible, we construct a Darboux chart around it; see Theorem \ref{th:Darboux2}. 
For simplicity of exposition we assume that $M$ is embedded, although the analogous results 
also apply to immersed submanifolds.

We begin by recalling a few basic facts; see e.g.\ \cite{LeBrun1995IJM,LeBrunSalamon1994IM}. The quotient projection 
\[ 
	 TX\stackrel{\alpha}{\longrightarrow} \nu:=TX/\xi
\] 
onto the {\em normal line bundle} $\nu$ of the contact subbundle $\xi$ is a nowhere vanishing
holomorphic $1$-form $\alpha$ on $X$ with values in $\nu$ such that  $\xi=\ker\alpha$. 
The differential $d\alpha$ defines a holomorphic section of $\Lambda^2(\xi^*)\otimes \nu$, and  
$\alpha\wedge (d\alpha)^n\ne 0$ is a nowhere vanishing section of the line bundle $K_X\otimes \nu^{\otimes(n+1)}$,
where $K_X=\Lambda^{2n+1}(T^*X)$ is the canonical line bundle of $X$.
This provides a holomorphic line bundle isomorphism 
\begin{equation}\label{eq:nu}
	 \nu^{\otimes(n+1)} \cong K_X^{-1}=\Lambda^{2n+1}(TX)
\end{equation}
between the $(n+1)$-st tensor power of the normal bundle $\nu$ and the anticanonical bundle $ K_X^{-1}$ of $X$. 
On any open subset of $X$ over which the bundle $\nu$ is trivial, we may consider $\alpha$ as a scalar-valued $1$-form 
determined up to a nonvanishing holomorphic factor. The condition $\alpha\wedge (d\alpha)^n\ne 0$ implies that 
\[
	\omega:=d\alpha|_\xi 
\]
is a holomorphic symplectic form on the contact bundle $\xi$ which is determined up to a nonvanishing factor
(since $d(f\alpha)=fd\alpha + df\wedge \alpha=fd\alpha$ on $\xi=\ker\alpha$), so the pair 
$(\xi,\omega)$ is a {\em conformal symplectic holomorphic vector bundle} over $X$. 
The restriction of $\omega$ to any fibre $\xi_x$ $(x\in X)$ 
is a nondegenerate skew-symmetric bilinear form $\omega_x\colon \xi_x\times \xi_x\to\C$. 

Given an $\R$-linear subspace $U$ of $\xi_x$, we denote by 
$U^\perp$ its $\omega_x$-orthogonal complement:
\[
	U^\perp = \bigcap_{u\in U} \ker \omega_x(u,\cdotp) \subset \xi_x.
\]
Note that $U^\perp$ depends only on the conformal class of $\alpha$, and hence
only on the contact structure $\xi$.
Since $\omega_x(u,\cdotp)\colon \xi_x\to\C$ is a $\C$-linear form, $U^\perp$ is a complex subspace of $\xi_x$.
We say that $U$ is {\em $\omega_x$-isotropic} if $U\subset U^\perp$; equivalently, 
$\omega_x(u,v)=0$ for any pair of vectors $u,v\in U$. Since $U^\perp$ is complex,
it follows that $U^\C:= \Span_\C(U) \subset U^\perp$. Since the $2$-form $\omega_x$ on $\xi_x$ is nondegenerate, 
we have the following dimension formula (see \cite[Sec.\ 2.4]{Geiges2008}):
\begin{equation}\label{eq:dimformula}
	\dim_\C U^\C + \dim_\C U^\perp=\dim_\C\xi_x=2n.
\end{equation}
It follows that any real isotropic subspace $U$ of $\xi_x$ satisfies 
$\dim_\R U\le 2n$, and we have $\dim_\R U=2n$ if and only if $U=U^\C=U^\perp$;
such  $U$ is said to be {\em $\omega_x$-Lagrangian}. In particular, we see that every 
Lagrangian subspace of $\xi_x$ is complex.

These notions and observation extend in an obvious way to smooth submanifolds $M$ of $X$.
Thus, $M$ is isotropic with respect to the contact structure $\xi=\ker\alpha$ if $T_x M\subset \xi_x$ holds for all $x\in M$; 
equivalently, if $\alpha|_{TM}=0$. This implies that $\omega=d\alpha|_\xi$ also vanishes on the tangent bundle 
$TM$, so $T_x M$ is $\omega_x$-isotropic for every $x\in M$ and therefore $\dim_\R M\le 2n$. A $\xi$-isotropic submanifold $M$
of $X$ is said to be {\em Legendrian} if it has maximal real dimension $2n$. We have seen above that for any such submanifold,
the tangent space $T_x M$ at each point $x\in M$ is a complex linear subspace of $\xi_x$ (satisfying $T_x M=(T_x M)^\perp$);
it follows that $M$ is a complex submanifold of $X$. This observation seems worthwhile recording.
(The special case for $\dim X=3$ was observed in \cite[Proposition 1.5]{ForstnericLarusson2017MZ}.)

\begin{lemma}\label{lem:Legendrian}
Let $(X,\xi)$ be a complex contact manifold. Every smooth Legendrian submanifold $M$ of $(X,\xi)$ 
is a complex submanifold of $X$.
\end{lemma}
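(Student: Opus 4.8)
The plan is to show that at each point $x\in M$ the real tangent space $T_xM$ is a complex subspace of $T_xX$; since this holds at every point and $M$ is a smooth submanifold, an almost-complex (indeed integrable, holomorphic) structure argument then identifies $M$ as a complex submanifold. The essential observation is already assembled in the discussion preceding the statement, so the proof is really a matter of invoking the pointwise linear-algebra facts established there and checking that they globalize.

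First I would fix $x\in M$ and work inside the contact hyperplane $\xi_x$ equipped with the conformal symplectic form $\omega_x=d\alpha|_{\xi_x}$, which is a nondegenerate skew-symmetric $\C$-bilinear form well-defined up to a nonzero scalar. Since $M$ is isotropic we have $T_xM\subset \xi_x$ and $\alpha|_{TM}=0$, whence $d\alpha$ vanishes on $TM$ and $T_xM$ is an $\omega_x$-isotropic real subspace of $\xi_x$. Because $M$ is Legendrian it has maximal real dimension $\dim_\R T_xM=2n$. Now I would apply the dimension formula \eqref{eq:dimformula}, namely $\dim_\C U^\C + \dim_\C U^\perp = 2n$ with $U=T_xM$: isotropy gives $U^\C\subset U^\perp$, and combining $\dim_\R U=2n$ with $\dim_\C U^\perp \le \dim_\C U^\C$ (which follows since $U\subset U^\perp$ forces $U^\C\subset U^\perp$, so $\dim_\C U^\C\le \dim_\C U^\perp$, and the formula then pins both to $n$) yields $U=U^\C=U^\perp$. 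The equality $U=U^\C$ says precisely that $T_xM$ is a complex-linear subspace of $\xi_x\subset T_xX$.

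Having established that $T_xM$ is $J$-invariant at every point $x\in M$, where $J$ is the ambient almost complex structure of $X$, I would conclude that $M$ is a complex submanifold. Concretely, a smooth real submanifold of a complex manifold whose tangent space is complex-linear at every point is a complex (holomorphic) submanifold: one may see this by choosing local holomorphic coordinates on $X$ and noting that the $J$-invariance of $TM$ forces the defining functions of $M$ to satisfy the Cauchy--Riemann equations, or equivalently that $M$ is a Cauchy--Riemann submanifold of CR dimension equal to its real dimension divided by two with no purely real directions, hence generic of the correct type and therefore complex.

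I do not expect a serious obstacle here; the content is entirely linear-algebraic and is already distilled in the paragraph containing \eqref{eq:dimformula}. The one point demanding a little care is the chain of inequalities that upgrades mere isotropy to the Lagrangian equality $U=U^\perp$: one must use that $U^\perp$ is automatically a complex subspace (because $\omega_x(u,\cdot)$ is $\C$-linear), so that isotropy $U\subset U^\perp$ promotes to $U^\C\subset U^\perp$, and only then does the dimension count \eqref{eq:dimformula} force $\dim_\C U^\C=\dim_\C U^\perp=n$ and hence $U=U^\C$. Getting the real-versus-complex dimension bookkeeping exactly right is the only place where a misstep could occur.
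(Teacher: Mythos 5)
Your proposal is correct and follows essentially the same route as the paper: the pointwise linear algebra (isotropy plus the dimension formula \eqref{eq:dimformula} forcing $T_xM=(T_xM)^{\C}=(T_xM)^{\perp}$, hence $J$-invariance of $T_xM$) followed by the standard fact that a smooth submanifold with complex-linear tangent spaces everywhere is a complex submanifold, which the paper likewise invokes without further elaboration. The only blemish is the transient inequality ``$\dim_\C U^\perp \le \dim_\C U^\C$'' written with the wrong orientation before your parenthetical corrects it; the intended bookkeeping ($\dim_\C U^\C\ge n$ from $\dim_\R U=2n$, and $\dim_\C U^\C\le\dim_\C U^\perp$ from isotropy, pinning both to $n$) is sound.
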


Since a Stein manifold does not admit any compact 
complex submanifolds of positive dimension, the lemma implies

\begin{corollary}
A Stein contact manifold does not have any compact smooth Legendrian submanifolds.
\end{corollary}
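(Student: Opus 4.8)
The plan is to derive the corollary directly from Lemma \ref{lem:Legendrian} together with the elementary fact that a Stein manifold carries no compact complex submanifolds of positive dimension. First I would argue by contradiction: suppose $M$ is a compact smooth Legendrian submanifold of a Stein contact manifold $(X,\xi)$, where $\dim_\C X = 2n+1$. By the definition of Legendrian, $M$ has maximal real dimension $\dim_\R M = 2n$, and since $2n+1\ge 3$ we have $n\ge 1$; thus $M$ is a nonempty compact manifold of positive dimension.

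Next I would invoke Lemma \ref{lem:Legendrian}, which asserts that $M$ is in fact a complex submanifold of $X$. Combined with the dimension count, this makes $M$ a compact complex submanifold of $X$ of complex dimension $n\ge 1$, hence of positive dimension.

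Finally, I would apply the standard maximum-principle argument for Stein manifolds. Since $X$ is Stein, it admits a proper holomorphic embedding into some $\C^N$; each coordinate function of this embedding restricts to a holomorphic function on the compact complex manifold $M$ and is therefore constant by the maximum principle. Consequently the embedding is constant on $M$, contradicting both its injectivity and the positive dimension of $M$. Therefore no compact smooth Legendrian submanifold can exist in a Stein contact manifold.

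This argument presents no genuine obstacle; the only point deserving care is the dimension bookkeeping. One must verify that a Legendrian submanifold has \emph{positive} complex dimension, which relies on $n\ge 1$ (equivalently $\dim X\ge 3$), so that the maximum principle indeed produces a contradiction rather than merely confining $M$ to a point.
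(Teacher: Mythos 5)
Your argument is correct and is essentially the same as the paper's: the corollary is deduced directly from Lemma \ref{lem:Legendrian} together with the standard fact that a Stein manifold has no compact complex submanifolds of positive dimension (which you spell out via the maximum principle, while the paper simply cites it). The dimension bookkeeping you flag is fine, since $\dim_\R M = 2n \ge 2$ forces positive complex dimension.
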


We now introduce the relevant decomposition of the complex normal bundle 
\[
	N(M,X)=TX|_M/TM
\]
of an isotropic complex submanifold $M\subset X$. Assume that $\dim X=2n+1\ge 3$ and $\dim M=m \in\{1,\ldots,n\}$. 
Recall that $\nu=TX/\xi$. We have 
\begin{equation}\label{eq:normalbundle}
	N(M,X) = \nu|_M  \oplus (\xi|_M/TM^\perp) \oplus (TM^\perp /TM).
\end{equation}
(Compare with \cite[(2.6), p.\ 69]{Geiges2008} for the analogous decomposition in the smooth case.)
If $M$ is a Stein manifold, then the component bundles in \eqref{eq:normalbundle} can be embedded as holomorphic
vector subbundles of $N(M,X)$, and the latter is a holomorphic subbundle of 
the restricted tangent bundle $TX|_M=TM\oplus N(M,X)$.
In particular, \eqref{eq:normalbundle} can be seen as an internal direct sum of holomorphic vector subbundles.
Since the rank of the bundle $TM^\perp$ equals $2n-m$ in view of the dimension formula
\eqref{eq:dimformula}, the summands on the right hand side of \eqref{eq:normalbundle}
have ranks $1,m$, and $2(n-m)$, respectively. By \cite[Lemma 2.5.4]{Geiges2008}, the bundle
$\xi|_M/TM^\perp$ is isomorphic to the cotangent bundle $T^*M$ via the 
bundle isomorphism 
\begin{equation}\label{eq:Tstar}
	\xi|_M/ TM^\perp \to T^*M,\quad 
	\xi_x \ni v \mapsto i_v \omega|_{TM}\in T_x^*M \ \ (x\in M).
\end{equation}
Thus, assuming that the normal bundle $\nu=TX/\xi$ of $\xi$ is trivial over the submanifold $M$, 
the only bundle in the decomposition \eqref{eq:normalbundle}
which depends on the isotropic embedding $M\hra X$ is the rank $2(n-m)$
{\em conformal symplectic normal bundle}
\begin{equation}\label{eq:CSN}
	CSN(M,X) := TM^\perp /TM.
\end{equation}
This brings us to the following {\em holomorphic contact neighborhood theorem}, analogous to 
the corresponding result in smooth contact geometry (see e.g.\ \cite[Theorem 2.5.8, p.\ 71]{Geiges2008}).

%
%
%
\begin{theorem}\label{th:contactnbd}
Let $(X_i,\xi_i)$ $(i=0,1)$ be complex contact manifolds of dimension $2n+1\ge 3$ with locally closed
isotropic Stein submanifolds $M_i \subset X_i$. Suppose that $\nu_i=TX_i/\xi_i$ is trivial over $M_i$ for $i=0,1$, and  
there is a holomorphic vector bundle isomorphism of conformal symplectic normal bundles 
$\Phi\colon CSN(M_0,X_0)\to CSN(M_1,X_1)$ that covers a biholomorphism $f \colon M_0\to M_1$. 
Then $f$ extends to a holomorphic contactomorphism $F \colon \Ncal(M_0)\to \Ncal(M_1)$ 
of suitable Stein neighbourhoods of $M_i$ in $X_i$ such that 
\begin{equation}\label{eq:TF}
	TF|_{CSN(M_0,X_0)}=\Phi.
\end{equation}
%
%
%
\end{theorem}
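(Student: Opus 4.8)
The plan is to reduce Theorem~\ref{th:contactnbd} to a Moser-type normal form argument, closely paralleling the proof of Theorem~\ref{th:normal} but working now in the higher-dimensional isotropic setting. First I would construct, for each $i=0,1$, a holomorphic tubular neighborhood of $M_i$ modeled on its normal bundle. Since $M_i$ is Stein and isotropic, the decomposition \eqref{eq:normalbundle} exhibits $N(M_i,X_i)$ as an internal direct sum of holomorphic subbundles $\nu_i|_{M_i}\oplus(\xi_i|_{M_i}/TM_i^\perp)\oplus CSN(M_i,X_i)$; by \eqref{eq:Tstar} the middle summand is canonically $T^*M_i$, and $\nu_i|_{M_i}$ is trivial by hypothesis. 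Using the Docquier--Grauert theorem (as in the proof of Theorem~\ref{th:normal}) I would identify a Stein neighborhood $\Ncal(M_i)$ with a neighborhood of the zero section in the total space of this normal bundle. The given data $(\Phi,f)$, together with the canonical identifications of the $\nu$- and $T^*M$-summands induced by $f$, then yield a fibrewise linear holomorphic bundle isomorphism covering $f$, and hence a holomorphic diffeomorphism $G\colon \Ncal(M_0)\to\Ncal(M_1)$ with $G|_{M_0}=f$ and $TG|_{CSN(M_0,X_0)}=\Phi$.

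The map $G$ will not in general be a contactomorphism, so the second step is to correct it. Let $\alpha_i$ be a scalar contact form for $\xi_i$ near $M_i$ (available since $\nu_i$ is trivial over $M_i$, and after shrinking). Pulling back by $G$, I would set $\alpha_1':=G^*\alpha_1$ and $\alpha_0$ on $\Ncal(M_0)$; by construction these two $1$-forms agree along $M_0$ to the order dictated by the matching of normal bundles, i.e.\ $\alpha_0$ and $\alpha_1'$ induce the same hyperplane field along $M_0$ and the same conformal symplectic structure on $CSN(M_0,X_0)$. After rescaling $\alpha_1'$ by a nonvanishing holomorphic function so that $\alpha_0=\alpha_1'$ and $\alpha_0\wedge(d\alpha_0)^n=\alpha_1'\wedge(d\alpha_1')^n$ hold along $M_0$, I would run the linear interpolation
\[
	\alpha_t=(1-t)\alpha_0+t\alpha_1',\quad t\in[0,1],
\]
which stays contact in a neighborhood of $M_0$ because the contact condition holds along $M_0$ for every $t$. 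Exactly as in the proof of Theorem~\ref{th:normal}, I would seek a time-dependent holomorphic vector field $V_t=h_t\Theta_t+Y_t$, with $\Theta_t$ the Reeb field of $\alpha_t$, solving $\phi_t^*\alpha_t=\alpha_0$ with $\phi_t|_{M_0}=\mathrm{id}$; the function $h_t$ is obtained by solving the noncharacteristic first-order linear PDE $\Theta_t(h_t)=\Theta_t\rfloor(\alpha_0-\alpha_1')$ with vanishing initial data on a noncharacteristic hypersurface, and $Y_t$ is then determined by nondegeneracy of $d\alpha_t$ on $\ker\alpha_t$. Since $\alpha_0-\alpha_1'$ vanishes along $M_0$, the vector field $V_t$ vanishes there, its flow exists near $M_0$, and $\phi_1$ satisfies $\phi_1^*\alpha_1'=\alpha_0$. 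Setting $F:=G\circ\phi_1$ gives the desired contactomorphism with $F|_{M_0}=f$ and $TF|_{CSN(M_0,X_0)}=\Phi$.

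I expect the main obstacle to be the first step rather than the Moser homotopy: specifically, arranging the tubular-neighborhood identification $G$ so that it respects all three summands of \eqref{eq:normalbundle} simultaneously, and in particular so that $TG$ restricts to $\Phi$ on $CSN$ while also matching the canonical $\nu$- and $T^*M$-data along $M_0$. The subtlety is that the splitting \eqref{eq:normalbundle} is internal only because $M_i$ is Stein (via Cartan's Theorem~B, as invoked after \eqref{eq:normalbundle}), and the identifications of the first two summands are forced by $f$ and by triviality of $\nu_i$, so I must check that these are compatible with the freedom encoded in $\Phi$ on the third summand. Once $G$ is built with $\alpha_0$ and $\alpha_1'$ agreeing to first order along $M_0$ in the appropriate sense, the homotopy argument is a routine adaptation of the one already carried out in Section~\ref{sec:normal}, the only new point being that $M_0$ now has positive dimension transverse to the contact directions, which is harmless since all the relevant quantities still vanish along $M_0$.
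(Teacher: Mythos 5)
Your first step coincides with the paper's: build $\Theta$ on $N(M_0,X_0)$ componentwise from the splitting \eqref{eq:normalbundle} (Reeb field on the $\nu$-summand, $T^*(f^{-1})$ on the $T^*M$-summand via \eqref{eq:Tstar}, $\Phi$ on $CSN$), and conjugate by Docquier--Grauert tubular neighborhood maps to get $G$ with $TG|_{CSN(M_0,X_0)}=\Phi$. The gap is in the correction step. You conclude $TF|_{CSN(M_0,X_0)}=\Phi$ from a Moser flow $\phi_t$ whose generating field $V_t$ merely \emph{vanishes along} $M_0$. First-order vanishing of $V_t$ only gives $\phi_t|_{M_0}=\mathrm{id}$; it does not give $T\phi_1=\Id$ on $TX_0|_{M_0}$, and a nontrivial linear shear in the normal directions would destroy the identity $TF|_{CSN}=TG\circ T\phi_1|_{CSN}=\Phi$, which is the whole content of \eqref{eq:TF}. (This is exactly the situation in the proof of Theorem \ref{th:normal}, where the remainder $\tilde\beta$ vanishes only to first order, $Y_t$ vanishes only to first order, and nothing is claimed about $T\phi_1$.) To get \eqref{eq:TF} you need the correcting field to vanish to \emph{second} order on $M_0$, and for that you need the $1$-form $\dot\alpha_t=G^*\alpha_1-\alpha_0$ to vanish together with its differential on $TX_0|_{M_0}$ and a primitive that vanishes to second order. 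The paper isolates this as Proposition \ref{prop:tangent}: it assumes $\alpha=\beta$ \emph{and} $d\alpha=d\beta$ on $TX|_M$, uses the generalized Poincar\'e lemma on a Stein neighborhood to produce a primitive $\zeta$ vanishing to second order on $M$, and runs a two-stage Moser argument (first a symplectic one on a hypersurface $\Sigma$ with $T\Sigma|_M=\ker\alpha|_M$, extended along Reeb flow lines, then a contact one) whose flows all have identity differential along $M$. Your Reeb-field/noncharacteristic-PDE method from Section \ref{sec:normal} does not by itself deliver this second-order vanishing.

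A related, smaller point: for the linear interpolation $\alpha_t=(1-t)\alpha_0+tG^*\alpha_1$ to remain contact along $M_0$ for all $t$, you need $d\alpha_0=d(G^*\alpha_1)$ on $TX_0|_{M_0}$ (so that $d\alpha_t|_{\ker\alpha_0}$ is $t$-independent there), not merely equality of the $1$-forms plus equality of the volume forms $\alpha\wedge(d\alpha)^n$; a convex combination of two symplectic forms with equal Pfaffian can degenerate. Your scalar rescaling does not secure this. The paper gets the needed first-order agreement for free from the construction of $G$ (the tubular maps have identity differential along $M_i$ and $\Theta$ matches the symplectic data), which is another reason to set up $G$ exactly as above rather than to patch the discrepancy afterwards by rescaling.
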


Theorem \ref{th:cor:contactnbd} in the introduction 
follows from Theorem \ref{th:contactnbd} by noting that for a Legendrian submanifold 
$M$ of $(X,\xi)$ (that is, an isotropic submanifold of maximal dimension)  
the conformal normal bundle \eqref{eq:CSN} has rank zero, and hence the condition
regarding $\Phi$ in Theorem \ref{th:contactnbd} is void.

\begin{proof}
The proof is analogous to that of \cite[Theorem 2.5.8]{Geiges2008}.
We also take into account \cite[Remark 2.5.12]{Geiges2008} and get
the sharper statement as in \cite[Theorem 6.2.2, p.\ 294]{Geiges2008}.

The assumption that the line bundle $\nu_i=TX_i/\xi_i$ is trivial on $M_i$ implies 
that $\xi_i=\ker\alpha_i$ for a holomorphic scalar-valued contact form $\alpha_i$
in a neighborhood of $M_i$ in $X_i$ $(i=0,1)$. Hence we can identify $\nu_i|_{M_i}$  
with the line subbundle $\langle R_i\rangle$ of $TX_i|_{M_i}$ spanned by the Reeb vector field $R_i$
of the contact form $\alpha_i$. Let $\omega_i=d\alpha_i|_{\xi_i}$ be the associated
holomorphic symplectic form on the contact subbundle $\xi_i$.
There is a unique  holomorphic isomorphism 
\[
	\Theta \colon N(M_0,X_0) \to N(M_1,X_1)
\]
of normal bundles, covering the biholomorphism $f\colon M_0\to M_1$, which is determined on the 
component subbundles in the decomposition \eqref{eq:normalbundle} as follows:
\begin{itemize}
\item on the line subbundle $\nu_0=\langle R_0\rangle$ we have $\Theta(R_0(x))=R_1(f(x))$, $x\in M_0$;
\vspace{1mm}
\item on $CSN(M_0,X_0)$ we let $\Theta=\Phi\colon CSN(M_0,X_0)\to CSN(M_1,X_1)$ be the
isomorphism in the statement of the theorem;
\vspace{1mm}
\item on $\xi|_{M_0}/ (TM_{0})^\perp_{\omega_0} \cong T^*M_0$ (see \eqref{eq:Tstar}) we let $\Theta=T^*(f^{-1})$,
the cotangent map of the biholomorphism $f^{-1}\colon M_1\to M_0$.
\end{itemize}
Since the submanifolds $M_i\subset X_i$ are locally closed and Stein, the Docquier-Grauert-Siu tubular neighborhood theorem 
(see \cite[Theorem 3.3.3, p.\ 67]{Forstneric2011}) implies that for $i=0,1$ there are an open Stein neighborhood 
$\Omega_i\subset X_i$ of $M_i$ and a biholomorphism $\Psi_i\colon \Omega_i\to \Omega'_i$ onto a 
neighborhood $\Omega'_i$ of the zero section in the normal bundle $N(M_i,X_i)$ such that,
if we identify the zero section with $M_i$ and note the canonical decomposition
\[
	T(N(M_i,X_i))|_{M_i} \cong TM_i \oplus  N(M_i,X_i) = TX_i|_{M_i},
\]
the differential $T\Psi_i|_{M_i}:TX_i|_{M_i}\to T(N(M_i,X_i))|_{M_i}$ is the identity map. 
After suitably shrinking the neighborhoods $\Omega_i\supset M_i$, the composition
\[
	G = \Psi_1^{-1} \circ \Theta \circ \Psi_0 \colon \Omega_0 \to \Omega_1
\] 
is a biholomorphism extending $f\colon M_0\to M_1$ such that the contact forms
$G^*\alpha_1$ and $\alpha_0$ agree on $TX_0|_{M_0}$ along with their differentials.
It follows from the construction that the tangent map $TG$ respects the decomposition
\eqref{eq:normalbundle} of the component bundles; in particular, we have $TG=\Phi$ on $CSN(M_0,X_0)$. 

It remains to correct $G$ to a contactomorphism. This is accomplished by finding a biholomorphic map 
$\psi$ on a neighborhood of $M_0$ in $X_0$ which fixes $M_0$ and satisfies 
\[
	\text{$T\psi=\Id$ on $TX_0|_{M_0}$}\quad \text{and} \quad \psi^*(G^*\alpha_1)=\alpha_0.
\]
The biholomorphism $F=G\circ \psi$ from a neighborhood of $M_0$ in $X_0$ onto a neighborhood of $M_1$ 
in $X_1$ then satisfies $F^*\alpha_1=\alpha_0$ and also \eqref{eq:TF}.

A biholomorphism $\psi$ with these properties is furnished by the following proposition,
applied  with the pair of contact forms $\alpha=\alpha_0$ and $\beta=G^*\alpha_1$ on $X=X_0$,
with the isotropic Stein submanifold $M=M_0\subset X_0$.

%
%
%
%

\begin{proposition}\label{prop:tangent} 
Assume that $(X,\alpha)$ is a complex contact manifold with a locally closed
isotropic Stein submanifold $M\subset X$, and $\beta$ is a holomorphic $1$-form
on a neighborhood of $M$ in $X$ such that $\alpha=\beta$ and $d\alpha=d\beta$
hold on $TX|_M$. Then there exist a neighborhood $\Omega\subset X$ of $M$ and a
biholomorphism $\psi \colon \Omega\to\psi(\Omega)\subset X$, fixing $M$ pointwise,
whose differential agrees with the identity map on $TX|_M$ and satisfies $\psi^*\beta=\alpha$.
\end{proposition}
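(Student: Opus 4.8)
The plan is to run Moser's method along $M$, just as in the proof of Theorem \ref{th:normal}, but now taking care to control the full first-order behaviour of the isotopy along $M$. First I would interpolate
\[
	\alpha_t = \alpha + t(\beta-\alpha), \qquad t\in[0,1],
\]
and write $\eta=\alpha-\beta$. Since $\alpha=\beta$ and $d\alpha=d\beta$ on $TX|_M$, the form $\eta$ vanishes to first order along $M$ (that is, $\eta|_M=0$ and $d\eta|_M=0$), and $\alpha_t=\alpha$, $d\alpha_t=d\alpha$ on $TX|_M$. Hence $\alpha_t\wedge(d\alpha_t)^n=\alpha\wedge(d\alpha)^n\neq0$ along $M$, and by continuity, uniformly in $t\in[0,1]$, each $\alpha_t$ is a contact form on a fixed neighbourhood of $M$, with a holomorphic Reeb field $\Theta_t$ depending smoothly on $t$.

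Next I would seek a time-dependent holomorphic vector field $V_t=h_t\Theta_t+Y_t$, with $h_t$ a function and $Y_t\in\ker\alpha_t$, whose flow $\psi_t$ satisfies $\psi_t^*\alpha_t=\alpha$. Differentiating this identity and using Cartan's formula yields, exactly as in the proof of Theorem \ref{th:normal}, the equation
\[
	(\beta-\alpha)+dh_t+Y_t\,\rfloor\,d\alpha_t=0 .
\]
Contracting with $\Theta_t$ reduces it to the holomorphic transport equation $\Theta_t(h_t)=\Theta_t\,\rfloor\,(\alpha-\beta)$, after which the nondegeneracy of $d\alpha_t$ on $\ker\alpha_t$ determines $Y_t$ uniquely from the $1$-form $\mu_t:=(\beta-\alpha)+dh_t$, which annihilates $\Theta_t$ by the transport equation. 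Since $M$ is isotropic it is tangent to $\xi=\ker\alpha$ and transverse to every $\Theta_t$; choosing a hypersurface $\Sigma\supset M$ with $T_x\Sigma=\xi_x$ for $x\in M$, the transport equation is noncharacteristic along $M$, and because $M$ is Stein it admits holomorphic solutions $h_t$ near $M$ with prescribable holomorphic Cauchy data on $\Sigma$.

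The hard part will be to arrange that $V_t$ vanishes not merely on $M$ but to first order along $M$, i.e.\ $TV_t|_M=0$; this is precisely what promotes ``$\psi$ fixes $M$ pointwise'' to ``$T\psi=\Id$ on $TX|_M$''. Unwinding $V_t=h_t\Theta_t+Y_t$, first-order vanishing of $V_t$ is equivalent to $h_t$ vanishing to second order along $M$ (so that $dh_t|_M=0$) together with $Y_t$ vanishing to first order along $M$, and the latter holds iff $\mu_t$ vanishes to first order along $M$. Now $\mu_t|_M=0$ once $dh_t|_M=0$, and the antisymmetric part of its first jet is $d\mu_t|_M=-d\eta|_M=0$ automatically. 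The decisive observation is that $d\eta|_M=0$ forces the symmetric part of the first jet of $\eta$ along $M$ to be a symmetric normal--normal tensor $S$, hence realizable as the Hessian along $M$ of a function that vanishes to second order. One therefore prescribes the $2$-jet of $h_t$ along $M$ so that $\Hess(h_t)|_M=S$: its $\Sigma$-tangential part is fixed through the Cauchy data, while its Reeb-direction part is forced by differentiating the transport equation and must be checked to agree with the corresponding components of $S$. This compatibility verification is the delicate computation; the Stein hypothesis on $M$ is what guarantees the existence of the required holomorphic functions with prescribed jets along $M$. Cancelling $S$ makes $\mu_t$, hence $Y_t$, hence $V_t$, vanish to first order along $M$.

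Finally, because $V_t$ vanishes along $M$, its flow $\psi_t$ fixes $M$ pointwise and exists for all $t\in[0,1]$ on a sufficiently small neighbourhood $\Omega$ of $M$, with $\psi_t^*\alpha_t=\alpha$ throughout. Setting $\psi=\psi_1$ gives $\psi^*\beta=\alpha$, and the first-order vanishing of each $V_t$ integrates to $T\psi=\Id$ on $TX|_M$, as required. This is the holomorphic analogue of Geiges' refinement of the Moser argument (cf.\ \cite[Remark 2.5.12]{Geiges2008} and \cite[Theorem 6.2.2]{Geiges2008}).
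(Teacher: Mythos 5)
Your strategy---a single Moser isotopy generated by $V_t=h_t\Theta_t+Y_t$, with the $2$-jet of $h_t$ along $M$ tuned so that $V_t$ vanishes to first order---is viable, and your analysis of the $1$-jet of $\beta-\alpha$ along $M$ is correct: the hypotheses force the tangential--normal part of that $1$-jet to vanish and the normal--normal part to be a symmetric tensor $S$, which is exactly Hessian-type data realizable by a function vanishing to second order on $M$. However, as written the proof has a genuine gap at the point you yourself flag: the Reeb-direction components of the $2$-jet of $h_t$ are not prescribable through the Cauchy data on $\Sigma$ but are forced by the transport equation $\Theta_t(h_t)=\Theta_t\rfloor(\alpha-\beta)$, and you state that they ``must be checked to agree with the corresponding components of $S$'' without performing the check. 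That compatibility is the crux of the entire argument, not a routine verification. (It does in fact hold: in coordinates with $M=\{w=0\}$, $\Sigma=\{w_0=0\}$ and $\Theta_t|_M=\di_{w_0}$, differentiating the transport equation in a normal direction $w_k$ and evaluating on $M$---where $dh_t$ vanishes and $\Theta_t$ has no tangential component---gives $\di_{w_0}\di_{w_k}h_t|_M=\di_{w_k}\bigl(\Theta_t\rfloor(\alpha-\beta)\bigr)\big|_M=-S_{0k}$, precisely because the tangential--normal part of the $1$-jet of $\beta-\alpha$ vanishes. But this computation has to be in the proof.)

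The paper avoids this computation altogether by a two-step reduction following Geiges. It first chooses a hypersurface $\Sigma\supset M$ with $T\Sigma|_M=\ker\alpha|_M$ and runs a \emph{symplectic} Moser argument on $\Sigma$ for $\omega_t=d\alpha_t|_\Sigma$: the generalized Poincar\'e lemma supplies a primitive $\zeta$ of $\omega_1-\omega_0$ vanishing to second order on $M$, so the symplectic Moser field vanishes to second order, and the resulting map is then extended to a neighborhood of $M$ in $X$ by matching Reeb flow lines. This reduces to the case where $d\beta=d\alpha$ holds on a whole neighborhood rather than merely on $TX|_M$. Then $\beta-\alpha$ is closed, the generalized Poincar\'e lemma gives $\beta-\alpha=dh$ with $h$ vanishing to second order on $M$, and the contact Moser equation is solved by $V_t=-h\Theta_t$ with no $Y_t$-component and no transport equation at all; second-order vanishing of $h$ immediately yields $T\psi_t=\Id$ on $TX|_M$. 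The trade-off is clear: your route uses one isotopy but hinges on the delicate jet compatibility you left unproved, while the paper's route uses two isotopies, each of which is soft because the generalized Poincar\'e lemma does all the jet bookkeeping.
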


The proof of the proposition is obtained by a refinement of Moser's method. We shall adjust
\cite[proof of Theorem 6.2.2, p.\ 294]{Geiges2008} to the holomorphic case.

\begin{proof} 
The conditions imply that  
\[
	\alpha_t=(1-t)\alpha + t \beta\quad (t\in[0,1])
\]
is an isotopy of contact 1-forms in a neighborhood of $M$ in $X$ 
such that $\alpha_t$ and $d\alpha_t$ are independent of $t$ on $TX|_M$;
hence, the Reeb vector field $R_t$ of $\alpha_t$ is also independent of $t$ along $M$. 
%
%
Since $M$ is Stein, there is a complex submanifold $\Sigma$ of $X$ containing $M$ such that 
$T\Sigma|_M=(\ker\alpha)|_M$. By shrinking $\Sigma$ around $M$ if necessary,
we may assume that the pullback of $d\alpha_t$ to $\Sigma$ is a holomorphic symplectic form $\omega_t$ 
on $\Sigma$ whose restriction to $T\Sigma|_M$ is independent of $t$. 
%
%
Hence the $2$-form 
\[
	\eta=\omega_1-\omega_0=\dot \omega_t
\]
on $\Sigma$ is closed and it vanishes on $T\Sigma|_M$. Since $M$ is Stein, the generalized Poincar\'e lemma 
(see \cite[Corollary A.4, p.\ 403]{Geiges2008}) gives a holomorphic $1$-form $\zeta$ on 
a neighborhood of $M$ in $\Sigma$ that vanishes to the second order on $M$ and satisfies
\[
	d\zeta=\eta=\dot \omega_t. 
\]
Let $V_t$ be the holomorphic vector field on $\Sigma$
uniquely determined by the equation  
\[
	\zeta+V_t\rfloor \omega_t=0. 
\]
Then $V_t$ vanishes to the second order on $M$. Its flow $\phi_t$ $(t\in [0,1])$ exists in a neighborhood of $M$ in $\Sigma$,
it fixes $M$ pointwise, and it satisfies $T\phi_t=\Id$ on $T\Sigma|_M$. Furthermore, 
\[
	\frac{d}{dt} (\phi_t^*\omega_t) = \phi_t^*(L_{V_t}\omega_t + \dot \omega_t) = 
	\phi_t^*\bigl( d(V_t\rfloor \omega_t) + \dot \omega_t \bigr) = \phi_t^*(-d\zeta + \dot \omega_t)=0
\]
which implies $\phi_t^*\omega_t=\omega_0$ for all $t\in[0,1]$; in particular, $\phi_1^*\omega_1=\omega_0$. 
We extend $\phi_1$ from $\Sigma$ to a biholomorphism 
$\phi$ on a neighborhood of $M$ in $X$ by requiring that it sends flow lines of the Reeb vector field 
$R_0$ to those of $R_1$. Since $R_0=R_1$ on $M$, this gives $T\phi=\Id$ on $TX|_M$. 
This extension satisfies $\phi^*(d\beta)=d\alpha$ on a neighborhood of $M$.  

Replacing $\beta$ by $\phi^*\beta$, we have thus reduced the proof  to the case when 
$\alpha=\beta$ on $TX|_M$ and $d\alpha=d\beta$ holds on a neighborhood of $M$ in $X$. As before, set 
$\alpha_t=(1-t)\alpha + t \beta$ and look for a holomorphic flow $\psi_t$ satisfying 
$\psi_t^*\alpha_t=\alpha_0$ for all $t\in[0,1]$. Since $\dot\alpha_t = \beta-\alpha$ is a closed $1$-form 
that vanishes on $TX|_M$, the generalized Poincar\'e lemma (see \cite[Corollary A.4, p.\ 403]{Geiges2008}) 
can be used as above to obtain a solution satisfying $T\psi_t=\Id$ on $TX|_M$ for every $t\in [0,1]$. 
(Equivalently, the vector field $V_t$ generating the flow $\psi_t$ vanishes to the second order on $M$.) 
The map $\psi=\psi_1$ then satisfies the conclusion of Proposition \ref{prop:tangent}.
For further details, see \cite[proof of Theorem 6.2.2]{Geiges2008}.
\end{proof}
\vspace{-2mm}
This completes the proof of Theorem \ref{th:contactnbd}.
\end{proof}

\begin{remark}
In the smooth case, Proposition \ref{prop:tangent} holds for any smooth submanifold $M$ of $X$. 
In the holomorphic case, the tubular neighborhood theorem, which is used in the
proof of the generalized Poincar\'e lemma, is available only if $M$ is a Stein manifold.
\end{remark}

The following result is an analogue of Theorem \ref{th:normal} in the case when $M$ is a topologically contractible
Stein manifold, embedded as an isotropic submanifold in a complex contact manifold $(X,\xi)$. 
Contractibility of $M$ implies that all holomorphic  vector bundles over $M$ are topologically trivial, and 
hence holomorphically trivial by the Oka-Grauert principle (see \cite[Sec.\ 5.3]{Forstneric2011}). 
This allows us to choose global holomorphic coordinates on the normal bundle $N(M,X)$, 
thereby obtaining the Darboux normal form \eqref{eq:normal3} for 
the contact structure $\xi$ in a tubular neighborhood of $M$ in $X$ 
by following the proof of Theorem \ref{th:normal}. 

%
%
%
%
\begin{theorem}\label{th:Darboux2}
Let $(X,\xi)$ be a  complex contact manifold of dimension $2n+1\ge 3$. Assume that 
$M$ is a contractible Stein manifold of dimension $m$, $\theta_1,\ldots,\theta_m$ are 
holomorphic $1$-forms on $M$ providing a framing of the cotangent bundle $T^*M$,
and $f\colon M\to X$ is a holomorphic  
isotropic immersion.
Then there are a neighborhood $\Omega \subset M\times \C^{2n+1-m}$ of $M\times \{0\}$
and a holomorphic immersion $F\colon \Omega \to X$ (embedding if $f$ is an embedding) such that 
$F|_{M\times \{0\}}=f$ and  the contact structure $F^*\xi$ on $\Omega$ is given by the 
contact $1$-form
\begin{equation}\label{eq:normal3}
	\alpha= dz- \sum_{j=1}^m y_j\theta_j - \sum_{i=m+1}^n y_i dx_i,
\end{equation}
where 
$(x_{m+1},\ldots,x_{n},y_1,\ldots,y_n,z)$ are complex coordinates on $\C^{2n+1-m}$.
\end{theorem}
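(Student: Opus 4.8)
The plan is to follow the proof of Theorem~\ref{th:normal} almost verbatim, the only genuinely new feature being the simultaneous normalization of the $m$ framing directions $\theta_1,\dots,\theta_m$ in place of the single $1$-form $\theta$. First I would set up the Darboux tube exactly as before. Since $M$ is contractible and Stein, its normal bundle $N(M,X)$ of rank $2n+1-m$ is holomorphically trivial by the Oka-Grauert principle \cite[Theorem~5.3.1]{Forstneric2011}, so the Docquier-Grauert tubular neighborhood theorem \cite[Theorem~3.3.3]{Forstneric2011} provides a Stein neighborhood $\Omega\subset M\times\C^{2n+1-m}$ of $M\times\{0\}$ with convex fibers, together with a holomorphic immersion $F\colon\Omega\to X$, an embedding when $f$ is, such that $F|_{M\times\{0\}}=f$. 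As $\Omega$ is homotopy equivalent to $M$ it is contractible, hence $\nu=T\Omega/F^*\xi$ is trivial and the quotient projection is a scalar holomorphic contact $1$-form $\beta$ with $\ker\beta=F^*\xi$. Because every complex vector bundle on $\Omega$ and on $M$ is trivial, Lemma~\ref{lem:GL} is available throughout in the form of Remark~\ref{rem:lemGL}.

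Next I would reduce $\beta$ to the model $\alpha$ of \eqref{eq:normal3} by coordinate changes fixing $M\times\{0\}$ pointwise. Writing $\zeta=(\zeta_1,\dots,\zeta_{2n+1-m})$ for the fiber coordinates, along $M\times\{0\}$ one has $\beta=\sum_j a_j(x)\,d\zeta_j$ with $a_j\in\Oscr(M)$ having no common zero and no $\theta_j$-terms, since $M\times\{0\}$ is $\beta$-isotropic; Lemma~\ref{lem:GL} rotates the contact planes so that $\beta=dz$ there, where $z=\zeta_1$, and after dividing by the nonvanishing $dz$-coefficient I may assume this coefficient is $1$ nearby. The step that differs from Theorem~\ref{th:normal} is the treatment of the $M$-directions: the first-order $\theta_j$-coefficients of $\beta$ form an $m\times(2n-m)$ holomorphic matrix $(b_{j,k}(x))$, and the contact condition $\beta\wedge(d\beta)^n\ne0$ forces this matrix to have maximal rank $m$ at every point. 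Here the hypothesis that $\theta_1,\dots,\theta_m$ \emph{frame} $T^*M$ is essential, since it makes $\theta_1\wedge\cdots\wedge\theta_m$ a nonvanishing $m$-form, so $(d\beta)^n$ can only be nondegenerate in the $M$-directions when the $1$-forms $\sum_k b_{j,k}(x)\,d\zeta_k$ are independent for $j=1,\dots,m$. Lemma~\ref{lem:GL} (or, when $m=n$, direct inversion of the resulting square matrix) then produces a linear change of the $\zeta_k$ with holomorphic dependence on $x$ that normalizes this block to $-\sum_{j=1}^m y_j\theta_j$, after renaming $m$ of the fiber coordinates as $y_1,\dots,y_m$.

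Shears of the form $z'=z+\sum c(x)\,\zeta_k\zeta_l$ then push the resulting $dy_j$-terms and cross terms into a remainder $\tilde\beta$, since $(d\beta)^n$ contains the factor $\bigwedge_{j=1}^m d(y_j\theta_j)=\bigwedge_{j=1}^m(dy_j\wedge\theta_j)$ and such terms do not contribute along $M\times\{0\}$. The remaining $2(n-m)$ fiber coordinates are organized into the $n-m$ pairs $(x_i,y_i)$, $i=m+1,\dots,n$, by exactly the inductive argument of Theorem~\ref{th:normal}: at each stage the relevant coefficient functions have no common zeros, else $d\beta$ would lose the corresponding differential; Lemma~\ref{lem:GL} turns the coefficient of $dx_i$ into $-y_i$; and a shear removes the superfluous terms. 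This yields $\beta=\alpha+\tilde\beta$, with $\tilde\beta$ contributing nothing to $\alpha_t\wedge(d\alpha_t)^n$ on $M\times\{0\}$, where $\alpha$ is the form \eqref{eq:normal3}.

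Finally I would remove $\tilde\beta$ by Moser's method \cite{Moser1965TAMS}, transferred verbatim from Theorem~\ref{th:normal}. Setting $\alpha_t=\alpha+t\tilde\beta$, each $\alpha_t$ is contact near $M\times\{0\}$ and agrees with $\alpha$ there together with $\alpha_t\wedge(d\alpha_t)^n$; I seek a time-dependent field $V_t=h_t\Theta_t+Y_t$, with $\Theta_t$ the Reeb field of $\alpha_t$ and $Y_t$ tangent to $\ker\alpha_t$, whose flow $\phi_t$ fixes $M\times\{0\}$ and satisfies $\phi_t^*\alpha_t=\alpha$. The function $h_t$ solves $\Theta_t(h_t)=\Theta_t\,\rfloor\,(\alpha-\beta)$ with noncharacteristic initial hypersurface $\{z=0\}$ and zero initial value; since the right-hand side vanishes on $M\times\{0\}$, both $h_t$ and $dh_t$ vanish there, $Y_t$ is then fixed by the nondegeneracy of $d\alpha_t$ on $\ker\alpha_t$, and $V_t$ vanishes along $M\times\{0\}$, so its flow exists and fixes the submanifold. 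At $t=1$ one gets $\phi_1^*\beta=\alpha$, completing the proof. I expect the main obstacle to be the block normalization of the $\theta_j$-directions, namely deriving the maximal-rank property of $(b_{j,k})$ from the contact condition and the framing hypothesis, and bookkeeping the shears so that the $m$ tangent directions and the $n-m$ conformal symplectic normal pairs decouple in accordance with the decomposition \eqref{eq:normalbundle}.
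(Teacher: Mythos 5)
Your proposal is correct and follows essentially the same route as the paper's own proof: the same tubular-neighborhood setup, the same normalization of the $\theta_j$-block via the maximal-rank property of the $m\times(2n-m)$ coefficient matrix (which the paper likewise derives from $\beta\wedge(d\beta)^n\ne 0$) together with Lemma \ref{lem:GL}, the same absorption of non-contributing terms into the remainder, and the same Moser argument at the end. Your explicit treatment of the square case $m=n$ by direct inversion is a small point the paper leaves implicit, but otherwise the two arguments coincide.
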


\begin{proof}
The proof is similar to that of Theorem  \ref{th:normal}, so we only include a brief sketch. 

Let $p=2n-m$. Since $f\colon M\to X$ is isotropic, we have  $m\le n$ and hence $p\ge n$.
As in that proof, we find a Stein open neighborhood $\Omega \subset M\times \C^{p+1}$ of 
$M\times \{0\}^{p+1}$ and a holomorphic immersion $F\colon \Omega \to X$, with $F|_{M\times \{0\}}=f$,
such that $F^*\xi = \ker\beta$, where $\beta$ is a holomorphic $1$-form on $\Omega$
satisfying $\beta \wedge(d\beta)^n \neq 0$ and $M\times \{0\}^{p+1}$ is a
$\beta$-Legendrian submanifold of $M\times\C^{p+1}$. 
Let $x$ denote points in $M$ and $\zeta=(\zeta_0,\zeta_1,\ldots,\zeta_{p})$ 
be complex coordinates on $\C^{p+1}$. Along $M \times \{0\}^{p+1}=\{\zeta=0\}$ we have
$\beta(x)=\sum_{j=0}^{p} a_j(x) d\zeta_j$ $(x\in M)$ 
for some holomorphic functions $a_j\in\Oscr(M)$ without common zeros. The $1$-forms
$\theta_1,\ldots,\theta_m$ on $M$ do not appear in the above expression since $M\times \{0\}^{p+1}$ is 
$\beta$-Legendrian. By the same argument as in the proof of Theorem  \ref{th:normal} we transform
$\beta$ to the $1$-form $d\zeta_{0}$ along $M\times\{0\}^{p+1}$, and
we rename the variable $\zeta_{0}$ by calling it $z$. After dividing $\beta$ with the coefficient of $dz$
(which is nonvanishing in a neighborhood of $M\times\{0\}^{p+1}$) we have
\begin{equation}\label{eq:Taylor1b}
	\beta=dz+ \sum_{i=1}^m \biggl( \sum_{j=1}^{p} a_{i,j}(x) \zeta_j \biggr) \theta_i(x) 
	+ \sum_{j,k=1}^{p} c_{j,k}(x)\zeta_k\, d\zeta_j + \tilde \beta,
\end{equation}
where the $1$-form $\tilde \beta$ contains all terms whose coefficients are of order 
$\ge 2$ in the variables  $\zeta_1,\ldots,\zeta_{p}$ or they contain the $z$ variable; 
these terms disappear in $\beta \wedge(d\beta)^n$ at all points of $M \times \{0\}^{p+1}$.
By a similar argument as in the proof of Theorem  \ref{th:normal}, we 
see that the $m\times p$ matrix of coefficients $A(x)=(a_{i,j}(x))$ in \eqref{eq:Taylor1b} 
has maximal rank $m$ at every point $x\in M$.
(Indeed, if this fails at some point $x_0\in M$, we easily see that $\beta \wedge(d\beta)^n=0$ 
at $(x_0,0)\in M\times \{0\}^{p+1}$.) 
Hence, Lemma \ref{lem:GL} provides a holomorphic change of coordinates 
$(x,z,\zeta)\mapsto (x,z,B(x)\zeta)$, with $B(x)\in GL_{p}(\C)$ for all $x\in M$,
which reduces $\beta$ to 
\begin{equation}\label{eq:Taylor2b}
	\beta=dz - \sum_{i=1}^m \zeta_i \theta_i + \sum_{j,k=1}^{p} c_{j,k}(x)\zeta_k \, d\zeta_j + \tilde \beta.
\end{equation}
%
%
Since $d(\zeta_i\theta_i)=d\zeta_i\wedge \theta_i+\zeta_i d\theta_i=d\zeta_i\wedge \theta_i$ on $M\times \{0\}$, 
the differentials $d\theta_i$ do not contribute to $d\beta$ on $M\times \{0\}^{p+1}$, and 
$(d\beta)^n$ contains the factor $\wedge_{i=1}^m (d\zeta_i\wedge \theta_i)$.
Hence, we can move the terms with $d\zeta_1,\ldots,d\zeta_m$ in the second sum in \eqref{eq:Taylor2b},
as well as all terms containing $\zeta_k d\zeta_j$ for any $j>m$ and $k\le m$, into the remainder $\tilde \beta$
since none of these terms contributes to $\beta \wedge(d\beta)^n$ on $M\times \{0\}^{p+1}$.
%
%
%
Renaming the variables $\zeta_1,\ldots,\zeta_m$ by calling them $y_1,\ldots,y_m$, we thus have
\begin{equation}\label{eq:Taylor3b}
	\beta=dz - \sum_{i=1}^m  y_i\theta_i  + \sum_{j,k=m+1}^{p}  c_{j,k}(x)\zeta_k \, d\zeta_j + \tilde \beta.
\end{equation}
If $m=p$ (in which case the immersion $f\colon M\to X$ is Legendrian), we are done.
Otherwise, we finish the reduction as in the proof of Theorem \ref{th:normal}, changing 
the second sum on the right hand side of \eqref{eq:Taylor3b} to $ - \sum_{i=m+1}^n y_i dx_i$
after having suitably renamed the variables $\zeta_{m+1},\ldots,\zeta_p$.

Once the normalization of $\beta$ along $M\times \{0\}^{p+1}$ has been achieved, 
one completes the proof exactly as before by applying Moser's method, thereby 
removing the remainder $\tilde \beta$ and changing $\beta$ to the normal
form  \eqref{eq:normal3} in a neighborhood of $M\times\{0\}^{p+1}$ in $M\times \C^{p+1}$.
\end{proof}


\subsection*{Acknowledgements}
A.\ Alarc\'on is supported by the Ram\'on y Cajal program of the Spanish Ministry of Economy and Competitiveness
and by the MINECO/FEDER grant no. MTM2014-52368-P, Spain. 
F.\ Forstneri\v c is supported  by the research grants P1-0291 and J1-7256 from 
ARRS, Republic of Slovenia. A part of the work was done while the authors
were visiting the Center for Advanced Study in Oslo. They wish to thank this institution for 
the invitation, partial support (to Forstneri\v c), and excellent working conditions.
The initial version of the paper was prepared while Forstneri\v c was visiting the 
Department of Geometry and Topology of the  University of Granada, Spain, in 
January and February 2017, and the revised version was prepared 
during his visit at the University of Adelaide, Australia, in May 2017. 
He thanks both institutions for the invitation, hospitality and partial support. 

The authors wish to thank an anonymous referee for proposing to include results on contact
neighborhoods of isotropic Stein submanifolds of dimension $>1$. 
They also thank Finnur L\'arusson for a helpful discussion of this subject.


{\bibliographystyle{abbrv} \bibliography{bibAFL}}


\vspace*{0.5cm}
\noindent Antonio Alarc\'{o}n

\noindent Departamento de Geometr\'{\i}a y Topolog\'{\i}a e Instituto de Matem\'aticas (IEMath-GR), Universidad de Granada, Campus de Fuentenueva s/n, E--18071 Granada, Spain

\noindent  e-mail: {\tt alarcon@ugr.es}

\vspace*{0.5cm}
\noindent Franc Forstneri\v c

\noindent Faculty of Mathematics and Physics, University of Ljubljana, Jadranska 19, SI--1000 Ljubljana, Slovenia.

\noindent Institute of Mathematics, Physics and Mechanics, Jadranska 19, SI--1000 Ljubljana, Slovenia.

\noindent e-mail: {\tt franc.forstneric@fmf.uni-lj.si}

\end{document}